\numberwithin{equation}{section}
\newtheorem{lemma}{Lemma}[section]
\newtheorem{eg}{Example}[section]
\newtheorem{defn}{Definition}[section]
\newtheorem{assmp}{Assumption}[section]
\newtheorem{thm}{Theorem} [section]
\newcommand{\sgn}[1]{\ensuremath{\operatorname{sgn}\!\left(#1\right)}}
\newcommand{\dd}{\operatorname{d}\! }
\newcommand{\dt}{\operatorname{d}\! t}
\newcommand{\ds}{\operatorname{d}\! s}
\newcommand{\dw}{\operatorname{d}\! W}
\newcommand{\ep}{\varepsilon}
\newcommand{\nn}{\nonumber}
\newcommand{\E}{{\mathbb E}}
\newcommand{\R}{{\mathbb R}}
\newcommand{\BMO}{L^{2, \;\mathrm{BMO}}_{\mathbb F}(0, T;\R^{n})}
\newcommand{\idd}[1]{\ensuremath{\operatorname{\mathds{1}}_{#1}}}
\newcommand{\og}{\mathcal{G}}
\newcommand{\oh}{\mathcal{H}}
\newcommand{\lm}{\Lambda}
\newcommand{\hf}{\frac{1}{2}}
\newcommand{\ito}{It\^{o}'s lemma\xspace} 
\title[Optimal control of stochastic homogenous systems]
{Optimal control of stochastic homogenous systems}
\author[Y.~Hu]{Ying Hu}
\author[X.~Shi]{Xiaomin Shi}
\author[Z.~Q.~Xu]{Zuo Quan Xu}
\date{\today}
\keywords{}
\address{Y.~Hu: Univ Rennes, CNRS, IRMAR-UMR 6625, F-35000 Rennes, France.}
\email{{ying.hu@univ-rennes1.fr}}
\address{X.~Shi:  School of Statistics and Mathematics, Shandong University of Finance and Economics, Jinan
250014, China. }
\email{{shixm@mail.sdu.edu.cn}}
\address{Q. Z.~Xu: Department of Applied Mathematics, The Hong Kong Polytechnic University, Kowloon, Hong Kong, China.}
\email{{maxu@polyu.edu.hk}}
\begin{document}
\setcitestyle{numbers}
\maketitle


\begin{abstract}
This paper investigates a new class of homogeneous stochastic control problems with cone control constraints, extending the classical homogeneous stochastic linear-quadratic (LQ) framework to encompass nonlinear system dynamics and non-quadratic cost functionals. We demonstrate that, analogous to the LQ case, the optimal controls and value functions for these generalized problems are intimately connected to a novel class of highly nonlinear backward stochastic differential equations (BSDEs). We establish the existence and uniqueness of solutions to these BSDEs under three distinct sets of conditions, employing techniques such as truncation functions and logarithmic transformations. Furthermore, we derive explicit feedback representations for the optimal controls and value functions in terms of the solutions to these BSDEs, supported by rigorous verification arguments. Our general solvability conditions allow us to recover many known results for homogeneous LQ problems, including both standard and singular cases, as special instances of our framework.
\end{abstract}

\subsection*{Keywords:} Homogenous system; stochastic LQ problem; cone constraint; BSDE; existence and uniqueness; verification theorem.
\subsection*{Mathematics Subject Classification (2020):} 93E20. 60H10.

\section{Introduction}
Consider a controlled stochastic differential equation (SDE):
\begin{align}\label{state1}
\dd X_t=b_t( X_t, u_t)\dt+\sigma_t( X_t, u_t)^{\top}\dw_t,\quad X_0=x,
\end{align}
and a related cost functional:
\begin{align}\label{costfunction}
J(X, u):=\E\Big[\int_0^T f_t( X_t, u_t)\dt+g(X_{T})\Big],
\end{align}
where $u$ is the control variable, $X$ is the corresponding state variable, and $b$, $\sigma$, $f$, $g$ are given deterministic or stochastic functions.
The stochastic control problem associated with \eqref{state1} and \eqref{costfunction} is to minimize over control processes the cost functional $J$, and determine the associated value function:
\begin{align}\label{prob1}
V(x):=\inf_{u\in\mathcal{U}} J(X, u).
\end{align}
More precise formulation will be given in the next section.

A vast number of papers deal with the above stochastic control problem \eqref{state1}-\eqref{prob1} subject to various conditions. Please refer to the seminal books Yong and Zhou \cite{YZ} and Pham \cite{Pham} for a systematic account on stochastic optimal control.
There are two well-known approaches to tackle the problem nowadays.
The first one is called the dynamic programming. It leads to the study of the so-called Hamilton-Jacobi-Bellman equations when all the functions $b,\sigma,f,g$ are deterministic. The other approach is called the (stochastic) maximum principle, which provides a necessary condition for optimal control; see, e.g., Peng \cite{Peng}, Hu, Ji and Xue \cite{HJX}. It leads to the study of a system of forward and backward stochastic differential equations (FBSDEs) when some of the maps $b,\sigma,f,g$ are stochastic. Generally, such FBSDEs are fully coupled so that they cannot be further reduced to simple forms.

But when the problem \eqref{state1}-\eqref{prob1} is a linear-quadratic (LQ) problem, where the controlled dynamics is in linear form (i.e. $b$ and $\sigma$ are linear functions of $(x,u)$) and the cost functional $J$ in quadratic form (i.e. $f$ and $g$ are quadratic functions of $(x,u)$),
its FBSDEs system can be decoupled through the famous stochastic Riccati equation (SRE). And it admits elegant optimal state feedback control and the optimal cost value in terms of the solution to the SRE, see, e.g., Wonham \cite{Wonham} for deterministic coefficients case. When the coefficients are stochastic processes, Bismut \cite{Bismut76} firstly finds that the related SRE becomes a highly nonlinear backward stochastic differential equation (BSDE). Unfortunately he could not show the existence of such a solution in general. Since then, numerous progresses have been made in solving SREs. Peng \cite{Peng92} proves the existence of a unique solution to the SRE when $\sigma$ dose not contain the control variable.  Kohlmann and Tang \cite{KT} solves the existence and uniqueness issues for one-dimensional SREs, then Tang \cite{Tang03,Tang15} addresses the matrix-valued case using the stochastic maximum principle and dynamic programming method respectively. For the indefinite case, please refer to Du \cite{Du}, Hu and Zhou \cite{HZ03}, Qian and Zhou \cite{QZ}, Sun, Xiong and Yong \cite{SXY} and Yu \cite{Yu}.

\begin{eg}
Suppose that $\underline A$, $\overline A$, $\underline B$, $\overline B$, $C$, $D$, $Q$, $R$ are predictable processes of suitable dimension, $G$ is some random variable, and $p\geq 1$, let us  consider a stochastic control problem with the  following controlled SDE
\begin{align}
\dd X_t=(\underline A_tX_t^+-\overline A_tX_t^-+\underline B_t^{\top}u^+_t-\overline B_t^{\top}u^-_t)\dt+(C_t X_t+D_tu_t)^{\top}\dw_t,\quad X_0=x\in\R,
\end{align}
and cost functional
\begin{align}
J(X, u):=\E\Big[\int_0^T (|Q_tX_t|^p+|R_tu_t|^p)\dt+|GX_{T}|^p\Big].
\end{align}
Noting $\underline A$, $\overline A$, $\underline B$, $\overline B$, $C$, $D$, $Q$, $R$ are stochastic processes and $b_t(x,u)=\underline A_tx^+-\overline A_tx^-+\underline B_t^{\top}u^+-\overline B_t^{\top}u^-$ is not differentiable w.r.t. $(x,u)$, both the dynamic programming approach and the stochastic maximum principle fail to handle this problem.
To the best of our knowledge, no existing results in the literature can cover this example.
\end{eg}
Inspired by LQ problems and the above toy example, this paper proposes a new class of homogeneous stochastic control problems, which extend the classical homogeneous LQ problems to new ones with possibly nonlinear controlled dynamics and non-quadratic cost functionals, but their optimal solutions can be characterized by pure BSDEs, rather than fully coupled FBSDEs. 

To introduce our new class of problems, we first define homogeneous functions as follows.
\begin{defn}[Homogeneous function]\label{homofunc}
Given a real constant $p\neq 0$ and a closed cone $\mathscr{Y}\subseteq \R^{k} $, a function $\varphi:\mathscr{Y} \to\R^{d}$ is called a homogeneous function of degree $p$ in $y\in\mathscr{Y}$ if
\[\varphi( \lambda y)=\lambda^p \varphi( y)\]
holds for any scalar $\lambda > 0$ and $y\in \mathscr{Y} $.
\end{defn}

\begin{eg}\label{eg1}
Suppose $p>0$.
Then the following functions, with coefficients being possibly random, are homogeneous of degree $p$ in $(x,u)\in\R^{k}\times\R^{m}$:
\[ |Ax+Bu|^{p}, ~~ |Ax+B|u||^{p},~~ |x|^p+|u|^p, ~~ (|x|^2+|u|^2)^{p/2}, \]
\[A |x|^{p/2}|u|^{p/2}+B |x|^{p/3}|u|^{2p/3}, ~~|\underline Ax^++\overline A x^-+\underline B^{\top}u^+-\overline B^{\top}u^-|^p,\]
\[\inf_{(A,B)}|Ax+B^{\top}u|^{p},~~
\sup_{(A,B)}|Ax+B^{\top}u|^{p}.\]
As usual, $x^{+}:=(x_1^+,...,x_k^+)^{\top}, \ x^{-}:=(-x)^{+}$ for $x=(x_1,...,x_k)^{\top}\in\R^k$, where $x_j^+=\max\{x_j,0\}$,  $j=1,...,k$.
We remark that any homogeneous function in $(x,u)$ multiplied by $F\Big(\tfrac{u}{|x|}\idd{x\neq 0}\Big)$ is still a homogeneous function of the same degree, where $F$ is any function.
\end{eg}

In this paper, we only focus on scalar-valued state process, namely $X_{t}\in\R$ in the process \eqref{state1} and the controls  are constrained  to a closed cone $\Gamma\subseteq\R^{m}$. The stochastic control problem \eqref{state1}-\eqref{prob1}  is supposed to satisfy the following homogeneous assumption.
\begin{assmp}[Homogeneous system]\label{homocondition}
For all $t\in[0,T]$, the dynamic coefficients $b_{t}$ and $\sigma_{t}$ are homogeneous functions of degree $1$ in $(x,u)\in\R\times \Gamma$; and the cost functional coefficients $f_{t}$ and $g$ are homogeneous functions of degree $p\neq 0$ in $(x,u)\in\R\times\Gamma$ and $x\in\R$ respectively. 
\end{assmp}

By setting $\lambda=2$ and $y=0$ in Definition \ref{homofunc}, we see $\varphi(0)=0$ if $\varphi$ is homogeneous of degree $p\neq 0$. Therefore, all the coefficients $b_{t}$, $\sigma_{t}$, $f_{t}$ and $g$ are zero at the origin of coordinates under Assumption \ref{homocondition}. We will use this fact without claim in the future.

\begin{defn}[Homogeneous stochastic control problem]
The  problem \eqref{state1}-\eqref{prob1}  is called a homogeneous stochastic control problem of degree $p$, if the coefficients $b,\sigma,f,g$ satisfy Assumption \ref{homocondition} with $p\neq 0$.
\end{defn}

Clearly, the classical homogeneous LQ control problems are special homogeneous problems of degree $2$, therefore the new class of homogeneous problems, which allows non-linear state processes and non-quadratic cost functionals, is an extension of the homogeneous LQ problems.

This paper focuses on the solvability of the homogeneous stochastic control problems under three different conditions.
Although mainly focus on the case $p>1$, many parallel results can be established for the case $p<0$. 
We encourage the interested readers to give the details. The remainder case  is complex.

The idea to solve homogeneous stochastic control problems stems from Hu and Zhou \cite{HZ} and Ji, Jin and Shi \cite{JJS}. It is well known that for LQ stochastic control problems, the optimal controls take state feedback form through the famous SREs, see, e.g., Tang \cite{Tang03}. By performing Tanaka's formula to $X^+$ and $X^-$ respectively, it is proved in \cite{HZ} that the cone-constrained stochastic LQ problem admits a piecewise linear feedback optimal control expressed in terms of the solutions to two BSDEs. The above method (combined with some extra convex duality analysis) still works for mean-variance portfolio selection problems with a specific kind of non-linear wealth dynamic studied
in \cite{JJS}. In the models of \cite{HZ,JJS}, the optimal (translated) state processes will never cross $0$, i.e., it will remain positive (resp. negative) if the initial state is positive (resp. negative). This phenomenon actually results from the homogeneity of the control system. Therefore, our new homogeneous stochastic control problems will inherit this property.

In this paper, we extend the models of \cite{HZ,JJS} to a new class of homogeneous stochastic control problems with possibly non-linear state processes and non-quadratic cost functionals. We will show that the problem \eqref{state1}-\eqref{prob1} admits an optimal control, in a state feedback form via two   highly non-linear  BSDEs.
We firstly prove the existence and uniqueness of solutions to these two BSDEs based on truncation function technique, log transformation and some delicate analysis.
The famous result of Kobylanski \cite{Ko} on quadratic growth BSDEs plays a fundamental tool in our argument. Finally, the solutions of the associated BSDEs are used to construct a candidate control which is verified to be admissible, and optimal to the original homogeneous stochastic control problem \eqref{state1}-\eqref{prob1}.
We remark that the uniqueness of solutions to the BSDEs in \cite{HZ} was obtained as a by product of the verification theorem. Here we will provide a direct proof, based on the bounded mean oscillation (BMO) martingale theory, which is interesting in its own from the point of view of BSDE theory.

The rest part of this paper is organized as follows.
In Section \ref{fm}, we present some notation and the precise assumptions on the homogeneous stochastic control problem \eqref{state1}-\eqref{prob1}. Section \ref{BSDE} is devoted to establishing the solubility of two associated BSDEs under three different conditions.
In Section \ref{Verification}, we determine the optimal controls and optimal values to the homogeneous stochastic control problems via the solutions to the two BSDEs through a straightforward and rigorous verification theorem. Some concluding remarks are given in Section \ref{conclude}.

\section{Preliminaries}\label{fm}
Let $(\Omega, \mathcal F, \mathbb{P})$ be a fixed complete probability space on which are defined a standard $n$-dimensional Brownian motion $\{W_t\}_{t\geq0}$. Let $\{\mathcal{F}_t\}_{t\geq0}$ be the natural filtration of the Brownian motion $W$ augmented by the $\mathbb{P}$-null sets of $\mathcal{F}$. Let $\mathcal{P}$ be the predictable $\sigma$-field of $[0,T]\times\Omega$.
We stipulate that, in what follows, all the processes under consideration, unless otherwise stated, are $\mathcal{P}$ measurable.

We denote by $\R^m$ the set of $m$-dimensional column vectors, by $\R^m_+$ the set of vectors in $\R^m$ whose components are nonnegative, by $\R^{m\times n}$ the set of $m\times n$ real matrices, by $\mathbb{S}^m$ the set of symmetric $m\times m$ real matrices, and by $\mathbf{1}_m$ the $m$-dimensional identity matrix. Therefore, $\R^m\equiv\R^{m\times 1}$. 
For any matrix $M=(m_{ij})\in \R^{m\times n}$, we denote its transpose by $M^{\top}$, and its norm by $|M|=\sqrt{\sum_{ij}m_{ij}^2}$. If $M\in\mathbb{S}^n$ is positive definite (resp. positive semidefinite), we write $M>$ (reps. $\geq$) $0.$ We write $A>$ (resp. $\geq$) $B$ if $A, B\in\mathbb{S}^n$ and $A-B>$ (resp. $\geq$) $0.$

We fix a constant $T>0$ throughout the paper to denote the control horizon. We use the following notation throughout the paper:
\begin{align*}
L^2_{\mathcal{F}_T}(\Omega;\R)&=\Big\{\xi:\Omega\rightarrow
\R\;\Big|\;\xi\mbox { is }\mathcal{F}_{T}\mbox{-measurable, and }\E(|\xi|^{2})%
<\infty\Big\}, \\
L^{\infty}_{\mathcal{F}_T}(\Omega;\R)&=\Big\{\xi:\Omega\rightarrow
\R\;\Big|\;\xi\mbox { is }\mathcal{F}_{T}\mbox{-measurable, and essentially bounded}\Big\}, \\
L^{0}_{\mathbb F}(0, T;\R)&=\Big\{\phi:[0, T]\times\Omega\rightarrow
\R\;\Big|\;\phi\mbox{ is $\mathcal{P}$-measurable}
\Big\}, \\
L^{2,loc}_{\mathbb F}(0, T;\R)&=\Big\{\phi\in L^{0}_{\mathbb F}(0, T;\R)\;\Big|\; \int_{0}^{T}|\phi_t|^{2}\dt<\infty, \ a.s.\Big\}, \\
L^{2}_{\mathbb F}(0, T;\R)&=\Big\{\phi\in L^{0}_{\mathbb F}(0, T;\R)\;\Big|\; \E\int_{0}^{T}|\phi_t|^{2}\dt<\infty\Big\}, \\
L^{\infty}_{\mathbb F}(0, T;\R)&=\Big\{\phi\in L^{0}_{\mathbb F}(0, T;\R)\;\Big|\; \mbox{$\phi$ is essentially bounded}
\Big\},\\
\BMO&=\bigg\{\lm \in L^{2}_{\mathbb F}(0, T;\R^{n}) \;\bigg|\; \int_0^\cdot\lm_s^{\top}\dw_{s} \mbox{ is a BMO martingale on $[0, T]$}\bigg\}.
\end{align*}
These definitions are generalized in the obvious way to the cases that $\R$ is replaced by $\R^n$, $\R^{n\times m}$ or $\mathbb{S}^n$.
For the definition of BMO martingales, we refer the readers to \cite{Ka} or \cite{HSX}. Here we will use the following fact about BMO martingales: the Dol\'eans-Dade stochastic exponential $\mathcal{E}\big(\int_0^\cdot\lm'\dw\big)$ is a uniformly integrable martingale if $\lm\in\BMO$.

In our argument, $s$, $t$, $\omega$, ``$\mathbb{P}$-almost surely'' and ``almost everywhere'', will be suppressed for simplicity in many circumstances, when no confusion occurs.
We shall use $c$, which can be different at each occurrence, to represent some positive constant that may depend on the problem coefficients, but is independent of $s$, $t$, $\omega$, $P$, $\lm$ and any stopping times. We will use the elementary inequality $|xy|\leq \ep |x|^{2}+\frac{1}{4\ep}|y|^{2}$ frequently without claim.

Our aim is to solve the homogeneous stochastic control problem \eqref{state1}-\eqref{prob1}, where the coefficients
$b_t(\omega,x,v):[0,T]\times\Omega\times\R\times\Gamma\rightarrow\R$, $\sigma_t(\omega,x,v):[0,T]\times\Omega\times\R\times\Gamma\rightarrow\R^n$, $f_t(\omega,x,v):[0,T]\times\Omega\times\R\times\Gamma\rightarrow\R$ are given
$\mathcal{P}\otimes\mathcal{B}(\R)$-measurable functions, for any $v\in\Gamma$,
$g: \Omega\times\R\rightarrow\R$ is $\mathcal{F}_T\otimes\mathcal{B}(\R)$-measurable function  and they fulfill the homogeneous Assumption \ref{homocondition}. We also assume that $\varphi_t(\omega,x,v)$ is continuous w.r.t. $v\in\Gamma$ for any $(t,\omega,x)\in[0,T]\times\Omega\times\R$, where $\varphi=b,\sigma,f$.

The set of admissible  controls is defined as
\begin{align*}
\mathcal{U}:=\Big\{u: &\:[0,T]\times\Omega\rightarrow\Gamma\;\Big|\;\mbox{ $u$ is a $\mathcal{P}$ measurable process and the corresponding state }\\
&\mbox{process \eqref{state1} admits a unique continuous solution $X$ such that the cost }\\
& \mbox{functional $J(X,u)$ in \eqref{costfunction} is well-defined and finite, and $\E\Big[\sup_{t\in[0,T]}|X_t|^p\Big]<\infty$ } \Big\}.
\end{align*}

For any homogeneous function $\varphi$ of degree $p$ in $(x,u)\in\R\times \R^{m}$, we have
\begin{align}\label{homopro}
\varphi( x, u)&=|x|^p \Big(\varphi\big(1, \tfrac{u}{|x|}\big)\idd{x>0}+\varphi\big({-}1, \tfrac{u}{|x|}\big)\idd{x<0}\Big)+\varphi( 0, u)\idd{x=0}.
\end{align}
We may use this property throughout our analysis without claim.

\section{The associated BSDEs}\label{BSDE}
For $p\in\R$, $(t,v,P,\lm)\times[0,T]\times\Gamma\times\R\times\R^n$, define the following (random) functions:
\begin{align*}
&\og_{1,t}(v, P, \lm):=f_{t}(1, v)+\hf p(p-1)P |\sigma_{t}(1, v)|^2+pP b_{t}(1, v)+p\lm^{\top} \sigma_{t}(1, v),\\
&\og_{2,t}(v, P, \lm):=f_{t}({-}1, v)+\hf p(p-1)P |\sigma_{t}({-}1, v)|^2-pP b_{t}({-}1, v)-p\lm^{\top} \sigma_{t}({-}1, v),
\end{align*}
and
\begin{align*}
\og_{1,t}^*(P, \lm):=\inf_{v\in\Gamma}\og_{1,t}(v, P, \lm),\\
\og_{2,t}^*(P, \lm):=\inf_{v\in\Gamma}\og_{2,t}(v, P, \lm).
\end{align*}
Note that $\og_{1,t}^*$ and $\og_{2,t}^*$ may take the value of $-\infty$.

Now we introduce the following two one-dimensional BSDEs:
\begin{align}\label{P1}
P_t=g(1)+\int_t^T\og_{1,s}^*(P_s,\lm_s)\ds-\int_t^T\lm^{\top}_s\dw_s,~~ t\in[0,T],
\end{align}
and
\begin{align}\label{P2}
P_t=g(-1)+\int_t^T\og_{2,s}^*(P_s,\lm_s)\ds-\int_t^T\lm^{\top}_s\dw_s,~~ t\in[0,T].
\end{align}
They will play the critical role in solving the control problem \eqref{state1}-\eqref{prob1}.

\begin{defn}\label{def}
A pair of stochastic processes $(P,\lm)\in S^{\infty}_{\mathbb{F}}(0,T;\R)\times L^{2,loc}_{\mathbb{F}}(0,T;\R^n)$ is called a solution to the BSDE \eqref{P1}, if
$\int_0^T|\og_{1,t}^*(P_t,\lm_t)|\dt<\infty$ and
\begin{align*}
P_t=g(1)+\int_t^T\og_{1,s}^*(P_s,\lm_s)\ds-\int_t^T\lm^{\top}_s\dw_s,~~ t\in[0,T], \ \mathbb{P}-a.s.
\end{align*}
The solution is called nonnegative if $P\geq0$; and called uniformly positive if $P\geq c$ for some deterministic constant $c>0$. A solution to the BSDE \eqref{P2} can be defined similarly.
\end{defn}

When the coefficients in \eqref{P1} or \eqref{P2} satisfy some boundedness assumption, we have better estimates for their solutions shown as below.
\begin{lemma}\label{lemmaBMO}
Suppose $p\in\R$ and
\begin{align}\label{lemma:assu}
\int_{0}^{T}\big[|f_{t}(\pm1,0)|+|b_{t}(\pm1,0)|+ |\sigma_{s}(\pm1,0)|^{2}\big]\ds\in L^{\infty}_{\mathcal{F}_T}(\Omega;\R).
\end{align}
If $(P,\lm)$ is a solution to the BSDE \eqref{P1} or \eqref{P2}, then $\lm\in \BMO$.
\end{lemma}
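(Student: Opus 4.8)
The plan is to show that $\int_0^\cdot \lm_s^\top \dw_s$ is a BMO martingale by verifying the defining energy estimate: for every stopping time $\tau \le T$, $\E\big[\int_\tau^T |\lm_s|^2 \ds \,\big|\, \mathcal F_\tau\big] \le c$ for a deterministic constant $c$, using crucially that $P$ is bounded (it lives in $S^\infty_{\mathbb F}$) and that the driver has a quadratic-in-$\lm$ lower bound of the right sign. First I would observe that, since $\Gamma$ is a cone containing $0$ and the coefficients vanish at the origin, taking $v = 0$ in the infimum gives the one-sided bound $\og_{1,t}^*(P_t,\lm_t) \le \og_{1,t}(0,P_t,\lm_t) = f_t(1,0) + \hf p(p-1)P_t|\sigma_t(1,0)|^2 + pP_t b_t(1,0) + p\lm_t^\top\sigma_t(1,0)$. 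For the lower bound, I would use $\og_{1,t}(v,P,\lm) \ge f_t(1,v) + \hf p(p-1)P|\sigma_t(1,v)|^2 + pPb_t(1,v) + p\lm^\top\sigma_t(1,v)$; the delicate point is that $\frac12 p(p-1)P|\sigma_t(1,v)|^2$ is a genuinely useful nonnegative term only when $p(p-1)P \ge 0$, so the argument should be organized around completing the square in the cross term $p\lm^\top\sigma_t(1,v)$ against this quadratic — but this requires $P$ to be uniformly positive, not merely nonnegative, which is not assumed here. So instead the argument must extract the $|\lm|^2$ growth from the structure of $\og^*$ itself rather than from any single $v$.

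The cleaner route, and the one I would actually carry out, is the standard Itô argument applied to a test function of $P$. Since $P$ is bounded, say $|P_t| \le K$, apply Itô's formula to $\Phi(P_t)$ for a suitable convex $\Phi$ (e.g. $\Phi(x) = e^{\beta x}$ or a quadratic), obtaining
\begin{align*}
\Phi(P_\tau) = \Phi(g(1)) + \int_\tau^T \Big[\Phi'(P_s)\og_{1,s}^*(P_s,\lm_s) - \tfrac12 \Phi''(P_s)|\lm_s|^2\Big]\ds - \int_\tau^T \Phi'(P_s)\lm_s^\top \dw_s.
\end{align*}
The key is to control $\Phi'(P_s)\og_{1,s}^*(P_s,\lm_s)$ from above by a term involving $\epsilon|\lm_s|^2$ plus an integrable (indeed $L^\infty_{\mathcal F_T}$) process, using the one-sided bound from $v=0$ above together with the elementary inequality $p\lm^\top\sigma_t(1,0) \le \epsilon|\lm|^2 + c|\sigma_t(1,0)|^2$ and the boundedness of $P$; the assumption \eqref{lemma:assu} is exactly what guarantees that $\int_0^T [|f_t(1,0)| + |b_t(1,0)| + |\sigma_t(1,0)|^2]\dt$ is in $L^\infty_{\mathcal F_T}$. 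Choosing $\Phi$ so that $\tfrac12\Phi'' - \epsilon|\Phi'| \ge \delta > 0$ on $[-K,K]$ (possible for $\epsilon$ small once $K$ is fixed, by taking $\Phi$ with large enough curvature), the $|\lm_s|^2$ terms combine with a good sign, and after taking conditional expectation $\E[\cdot \mid \mathcal F_\tau]$ the stochastic integral drops out (it is a true martingale on $[\tau,T]$ because $\Phi'(P)$ is bounded and $\lm \in L^{2,loc}$ — one localizes first and passes to the limit by monotone convergence), leaving $\delta\,\E\big[\int_\tau^T |\lm_s|^2\ds \mid \mathcal F_\tau\big] \le 2\|\Phi\|_\infty + c\,\big\|\int_0^T[\cdots]\dt\big\|_{L^\infty} =: c$, uniformly in $\tau$. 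This is precisely the BMO condition. The argument for \eqref{P2} is identical after replacing $(1, b_t(1,\cdot), \sigma_t(1,\cdot))$ by $(-1, -b_t(-1,\cdot), -\sigma_t(-1,\cdot))$ and $g(1)$ by $g(-1)$.

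The main obstacle I anticipate is the interplay between the sign of $p(p-1)P$ and the localization/integrability bookkeeping: because $\og_{1,t}^*$ is only bounded above (it may equal $-\infty$) and the solution is a priori only in $L^{2,loc}$, one cannot directly take expectations, and one must be careful that the lower bound on $\og_{1,t}^*$ used to control $\int_\tau^T|\lm_s|^2\ds$ is not needed at all — only the upper bound $\og_{1,t}^* \le \og_{1,t}(0,\cdot)$ matters, which sidesteps the sign issue entirely. The remaining care is the standard one: introduce a localizing sequence of stopping times $\tau_n$, derive the estimate with $T$ replaced by $\tau_n$, and let $n\to\infty$ using Fatou/monotone convergence on the left and dominated convergence (via \eqref{lemma:assu} and boundedness of $P$) on the right.
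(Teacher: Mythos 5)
Your proposal is correct and is essentially the paper's own argument: the paper likewise applies It\^o's formula to a convex (quadratic) function of the bounded solution $P$, uses only the one-sided bound $\og_{1}^*(P,\lm)\le \og_{1}(0,P,\lm)$ coming from $0\in\Gamma$ together with \eqref{lemma:assu}, absorbs the small $|\lm|^2$ contribution, and localizes with stopping times before letting them tend to $T$ via Fatou, uniformly over $\tau$. The only point to state explicitly is that the test function must be nondecreasing on the range of $P$ (as your choice $\Phi(x)=e^{\beta x}$ is, or a quadratic centered below $-\|P\|_\infty$), since the upper bound on $\og_{1}^*$ transfers only after multiplication by $\Phi'(P)\ge 0$.
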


\begin{proof}
Suppose $(P,\lm)$ is a solution to the BSDE \eqref{P1}.
Let $M>0$ be an upper bound of $|P|$.
Using $0\in\Gamma$ and the boundedness of coefficients, we have
\begin{align}\label{upper}
\og_{1,t}^*(P, \lm)&\leq \og_{1,t}(0, P, \lm) \nn\\
&=f_{t}(1, 0)+\hf p(p-1)P |\sigma_{t}(1, 0)|^2+pP b_{t}(1, 0)+p\lm^{\top} \sigma_{t}(1, 0)\nn\\
&\leq |f_{t}(1, 0)|+c|b_{t}(1, 0)| +c|\sigma_{t}(1,0)|^{2}+\frac{1}{5M}|\lm|^{2},
\end{align}
where $c$ is a positive constant that may depend on $M$ and $p$.

We now show $\lm\in \BMO$.
For any stopping time $\tau\leq T$, define a sequence of stopping times: $$\tau_{k}=\inf\Big\{t> \tau: \int_{\tau}^t|\lm_{s}|^2\ds>k\Big\}\wedge T.$$
Since $\lm\in L^{2}_{\mathbb{F}}(0,T;\R^n)$, we see $\tau_{k}$ increasingly approaches to $T$ as $k\to+\infty$.
Applying It\^{o}'s formula to $(P-M)^2$ and using \eqref{upper} and $0\leq M-P\leq 2M$, we get
\begin{align*}
\E\Big[\int_\tau^{\tau_{k}}|\lm|^2\ds\;\Big|\;\mathcal{F}_{\tau}\Big]
&=(P_{\tau_{k}}-M)^2-(P_{\tau}-M)^2+\E\Big[\int_{\tau}^{\tau_{k}}2(M-P)\og_{1}^*(P, \lm)\ds\;\Big|\;\mathcal{F}_{\tau}\Big].
\end{align*}
Combined with \eqref{upper} and \eqref{lemma:assu}, we get
\begin{align*}
&\quad\;\E\Big[\int_\tau^{\tau_{k}}|\lm|^2\ds\;\Big|\;\mathcal{F}_{\tau}\Big]\\
&\leq (P_{\tau_{k}}-M)^2+\E\Big[\int_{\tau}^{\tau_{k}}4M\Big(|f_{t}(1, 0)|+c|b_{t}(1, 0)| +c|\sigma_{t}(1,0)|^{2}+\frac{1}{5M}|\lm|^{2}\Big)\ds\;\Big|\;\mathcal{F}_{\tau}\Big]\\
&\leq c+\frac{4}{5}\E\Big[\int_\tau^{\tau_{k}}|\lm|^2\ds\;\Big|\;\mathcal{F}_{\tau}\Big],
\end{align*}
where $c$ is a positive constant that does not depend on $\tau$ or ${k}$.
Thanks to the definition of $\tau_{k}$, the expectations in above are finite, so
\begin{align*}
\E\Big[\int_\tau^{\tau_{k}}|\lm|^2\ds\;\Big|\;\mathcal{F}_{\tau}\Big]&\leq 5c.
\end{align*}
By sending $k\to\infty$ and using Fatou's lemma, we conclude $\lm\in\BMO$. The case for \eqref{P2} can be dealt in the same way.
\end{proof}

We now solve the BSDEs \eqref{P1} and \eqref{P2} under three different conditions.
The first condition corresponds to the standard case in LQ theory, see, e.g., \cite{HSX}, \cite{HZ}.
\begin{assmp}\label{solvablecondition1}
It holds that
\[f(\pm1,0)\in L^{\infty}_{\mathbb{F}}(0,T;\R_{+}),~g(\pm1)\in L^{\infty}_{\mathcal{F}_T}(\Omega;\R_{+}),\]
and
\[f(0,v),~f(\pm1,v)\in L^{0}_{\mathbb{F}}(0,T;\R_{+}),\]
for any $v\in\Gamma$.
Also, there are constants $L, \delta> 0$ such that
\begin{align*}
\max\{|b_{t}(\pm1,v)|^{2},~|\sigma_{t}(\pm1,v)|^{2}\}\leq \delta f_{t}(\pm1,v)+L
\end{align*}
holds for any $v\in\Gamma$ and $t\in[0,T]$.
\end{assmp}
\begin{eg}
Suppose   $\alpha>1$, $c>0$,  $\underline A$, $\overline A$,  $Q\in L^{\infty}_{\mathbb{F}}(0,T;\R)$, $\underline B$, $\overline B\in L^{\infty}_{\mathbb{F}}(0,T;\R^m)$, $\underline C$, $\overline C\in L^{\infty}_{\mathbb{F}}(0,T;\R^n)$, $\underline D$, $\overline D$, $ R\in L^{\infty}_{\mathbb{F}}(0,T;\R^{n\times m})$, $G\in L^{\infty}_{\mathcal{F}_T}(\Omega;\R)$, and $R^\top R\geq c\mathbf{1}_m$.
\begin{enumerate}
\item
If $p\geq 1$, the following functions
\begin{gather*}
 b_t(x,u)=\underline A_tx^+-\overline A_t x^-+\underline B_t^\top u^+-\overline B_t^\top u^-, \ \sigma_t(x,u)=(\underline C_tx^+-\overline C_t x^-+\underline D_t u^+-\overline D_t u^-)^\top, \\
  f_t(x,u)=(|x|^{p}+|u|^{p})(|u|^{\alpha}/|x|^{\alpha})\idd{x\neq 0}, \ g(x)=|x|^p,
\end{gather*}
satisfy Assumptions \ref{homocondition}  and \ref{solvablecondition1}.

\item If $p\geq 2$, the following functions
\begin{gather*}
 b_t(x,u)=\underline A_tx^+-\overline A_t x^-+\underline B_t^\top u^+-\overline B_t^\top u^-, \ \sigma_t(x,u)=(\underline C_tx^+-\overline C_t x^-+\underline D_t u^+-\overline D_t u^-)^\top, \\
 f_t(x,u)=|Q_tx|^{p}+|R_tu|^{p}, \ g(x)=|Gx|^p,
\end{gather*}
satisfy Assumptions \ref{homocondition} and \ref{solvablecondition1}.
 If $\underline A=\overline A$, $\underline B=\overline B$, $\underline C=\overline C$, $\underline D=\overline D$, and $p=2$, we recover the stochastic LQ problem.
\end{enumerate}


\end{eg}

\begin{thm}\label{thmcase1}
Suppose Assumptions \ref{homocondition} and \ref{solvablecondition1} hold and $p\geq1$. Then there exists a unique nonnegative solution $(P,\lm)$ to \eqref{P1} (resp. to \eqref{P2}). Moreover, $\lm\in \BMO$.
\end{thm}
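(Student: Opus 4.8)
The plan is to establish everything for \eqref{P1}; the BSDE \eqref{P2} is completely analogous, obtained by interchanging the arguments $+1$ and $-1$ and flipping the signs in front of $b$ and $\sigma$, so I would only indicate that at the end.

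\emph{Step 1: one-sided bounds on the generator.} I would first record two elementary estimates, valid whenever $P\ge 0$ and $p\ge1$. Since $0\in\Gamma$, taking $v=0$ gives
\[
\og^*_{1,t}(P,\lm)\le \og_{1,t}(0,P,\lm)=f_t(1,0)+\hf p(p-1)P|\sigma_t(1,0)|^2+pPb_t(1,0)+p\lm^\top\sigma_t(1,0),
\]
whose right-hand side is affine in $(P,\lm)$ with essentially bounded coefficients (Assumption~\ref{solvablecondition1} implies \eqref{lemma:assu}). Conversely, since $p(p-1)\ge0$ and $P\ge0$ the term $\hf p(p-1)P|\sigma_t(1,v)|^2$ is nonnegative, so using $\max\{|b_t(1,v)|,|\sigma_t(1,v)|\}\le\sqrt{\delta f_t(1,v)+L}$ and completing the square in $\sqrt{f_t(1,v)}\ge0$ I would get
\[
\og^*_{1,t}(P,\lm)\ge -c\,(1+P^2+|\lm|^2),\qquad c=c(p,\delta,L).
\]
(Joint measurability of $\og^*_{1,\cdot}$ is clear, the infimum being attainable over a countable dense subset of $\Gamma$ by continuity in $v$.) Thus, on any bounded range of $P$, the generator is finite, concave in $(P,\lm)$ — hence continuous — and of quadratic growth in $\lm$ squeezed between $L^\infty$-processes times $(1+|\lm|^2)$.

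\emph{Step 2: existence, nonnegativity and boundedness.} Let $\bar P$ be the solution of the \emph{linear} BSDE obtained from \eqref{P1} by replacing $\og^*_{1,\cdot}$ with the affine upper bound of Step~1. Since $\sigma(1,0)$ is bounded, a Girsanov change of measure represents $\bar P$ as a conditional expectation of nonnegative bounded quantities, so $0\le\bar P_t\le M$ for an explicit deterministic $M$. Let $\rho(y)=0\vee(y\wedge M)$ and consider \eqref{P1} with generator $h_t(y,\lm):=\og^*_{1,t}(\rho(y),\lm)$; by Step~1 this $h$ is continuous, satisfies $|h_t(y,\lm)|\le c_t+c|\lm|^2$ with $c_t\in L^\infty$ uniformly in $y$, and the terminal value $g(1)$ is bounded. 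Moreover the constant $0$ is a subsolution ($g(1)\ge0$ and $h_t(0,0)=\inf_{v\in\Gamma}f_t(1,v)\ge0$), and $\bar P$ is a supersolution (for $\bar P_t\in[0,M]$, $h_t(\bar P_t,\bar\lm_t)=\og^*_{1,t}(\bar P_t,\bar\lm_t)\le\og_{1,t}(0,\bar P_t,\bar\lm_t)$, the generator of $\bar P$). A Kobylanski-type existence result between ordered barriers \cite{Ko} then yields a solution $(P,\lm)$ with $0\le P\le\bar P\le M$. Since $\rho(P)=P$, the pair $(P,\lm)$ solves the original BSDE \eqref{P1}, it is nonnegative and bounded, $\int_0^T|\og^*_{1,t}(P_t,\lm_t)|\dt<\infty$ by Step~1 together with $\lm\in L^2_{\mathbb F}$, and $\lm\in\BMO$ by Lemma~\ref{lemmaBMO}.

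\emph{Step 3: uniqueness.} Let $(P,\lm)$, $(P',\lm')$ be two nonnegative solutions; both are bounded and, by Lemma~\ref{lemmaBMO}, $\lm,\lm'\in\BMO$. For $\ep>0$ pick measurable $\ep$-optimal selections $v^\ep$, $v'^\ep$ for $\og^*_{1,t}(P_t,\lm_t)$, $\og^*_{1,t}(P'_t,\lm'_t)$. The first key point is that feeding the upper bound of Step~1 back into the defining near-optimality of $v'^\ep$ and completing the square (using $P'\ge0$ and the structure condition) yields $f_t(1,v'^\ep_t)\le c(1+|\lm'_t|^2)$, hence $|\sigma_t(1,v'^\ep_t)|^2\le c(1+|\lm'_t|^2)$, so $\int_0^\cdot\sigma_s(1,v'^\ep_s)^\top\dw_s$ is a BMO martingale (and likewise for $v^\ep$). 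The second point is linearization: since $\og_{1,t}(v,\cdot,\cdot)$ is affine, $\delta P:=P-P'$ and $\delta\lm:=\lm-\lm'$ satisfy, $\mathbb P$-a.s. for a.e.\ $t$,
\[
\og^*_{1,t}(P_t,\lm_t)-\og^*_{1,t}(P'_t,\lm'_t)\le \alpha'_t\,\delta P_t+p\,\sigma_t(1,v'^\ep_t)^\top\delta\lm_t+\ep,
\]
with $\alpha'_t=\hf p(p-1)|\sigma_t(1,v'^\ep_t)|^2+pb_t(1,v'^\ep_t)$ obeying $|\alpha'_t|\le c(1+|\lm'_t|^2)$; thus $\delta P$ is a subsolution of a linear BSDE with zero terminal value, BMO $z$-coefficient and drift coefficient $\alpha'$. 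Removing the $z$-term by a Girsanov change of measure and controlling the exponential weight $e^{\int\alpha'}$ via the John--Nirenberg inequality on a fine enough partition of $[0,T]$, I would obtain $\delta P_t\le C\ep$; letting $\ep\downarrow0$ gives $P\le P'$, and symmetrically $P'\le P$. Finally, subtracting the two equations, $\int_0^\cdot(\lm_s-\lm'_s)^\top\dw_s$ is a continuous local martingale with absolutely continuous paths, hence identically zero, so $\lm=\lm'$. The main obstacle is precisely this uniqueness step: the BMO bound on the near-optimal $\sigma_t(1,v^\ep_t)$ — which is exactly where the structure condition $\max\{|b|^2,|\sigma|^2\}\le\delta f+L$ and the sign of $P$ enter — and making the measure-change/comparison argument for the linearized equation rigorous despite $\alpha'$ being only of order $1+|\lm'|^2$; Step~2 is comparatively routine once the bounds of Step~1 are in hand.
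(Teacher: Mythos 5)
Your Steps 1--2 (existence, nonnegativity, boundedness, and $\lm\in\BMO$) follow essentially the paper's own route: the same two one-sided bounds on $\og_{1}^*$ obtained from $v=0$ and from the structure condition, a bounded nonnegative solution of the linear BSDE with generator $\og_{1}(0,\cdot,\cdot)$ used as an upper barrier, a truncation in the $P$-variable, Kobylanski's theorem plus comparison to remove the truncation, and Lemma \ref{lemmaBMO} for the BMO property. That part is sound.

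The uniqueness step, however, contains a genuine gap. The linearization through $\ep$-optimal selectors is fine, and your bound $f_t(1,v'^\ep_t)\le c(1+|\lm'_t|^2)$, hence $|\sigma_t(1,v'^\ep_t)|^2\le c(1+|\lm'_t|^2)$, does follow from Assumption \ref{solvablecondition1} and $P'\ge0$; but precisely because of this, the drift coefficient $\alpha'_t=\hf p(p-1)|\sigma_t(1,v'^\ep_t)|^2+pb_t(1,v'^\ep_t)$ is of size $1+|\lm'_t|^2$, i.e.\ \emph{quadratic} in a BMO kernel, and the weight $e^{\int_0^T\alpha'}$ is then in general not integrable: John--Nirenberg gives $\E\big[\exp\big(\epsilon\int_0^T|\lm'_s|^2\ds\big)\big]<\infty$ only for $\epsilon$ below a threshold determined by the BMO norm of $\lm'$, and the constant $c$ in front of $|\lm'|^2$ has no reason to lie below that threshold. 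The proposed remedy of passing to a fine partition of $[0,T]$ does not repair this, because the BMO norm of $\lm'$ restricted to a short subinterval need not become small (BMO martingales are not sliceable in general: the bracket may carry mass of order $\|\lm'\|^2_{\mathrm{BMO}}$ on arbitrarily short random intervals), so the John--Nirenberg threshold does not improve on subintervals and the conditional exponential moments you need on each piece are still unavailable. This is exactly the obstruction the paper's proof is built to avoid: the logarithmic change of variables $U=\ln(P+a)$, the restriction of the infimum to $w_{1,t}(v)\le c(1+|V|)$, and the monotonicity in $\tilde U$ of the auxiliary function $H$ yield a linearized inequality whose coefficients satisfy $|\beta|\le c(1+|V|+|\tilde V|)$, i.e.\ are only \emph{linear} in the BMO processes; for such coefficients the relevant exponential weights are integrable for every constant (Cauchy--Schwarz plus John--Nirenberg), which is what allows the argument of \cite[Theorem 3.5]{HSX} to close. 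To fix your Step 3 you would need either this log transformation or another device adapted to the concavity of $(P,\lm)\mapsto\og^*_{1,t}(P,\lm)$ (e.g.\ a $\theta$-difference argument), not the partition trick as stated.
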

\begin{proof}
We will focus on the solvability of BSDE \eqref{P1}, and the proof for \eqref{P2} is similar.

For notation simplicity, we write $w_{1,t}(v)=\max\{|b_{t}(1,v)|, |\sigma_{t}(1,v)|\}$.
Then Assumption \ref{solvablecondition1} implies $f_{t}(1, v)\geq \frac{1}{\delta}\big(w_{1,t}(v)^{2}-L\big)^{+}$. Thanks to $p\geq 1$, we have
\begin{align*}
\og_{1,t}^*(P^{+}, \lm)
&\geq \inf_{v\in\Gamma}\Big[ f_{t}(1, v)+pP^{+} b_{t}(1, v)+p\lm^{\top} \sigma_{t}(1, v)\Big] \nn\\
&\geq \inf_{v\in\Gamma}\Big[\frac{1}{\delta}\big(w_{1,t}(v)^{2}-L\big)^{+}-c(P^{+}+|\lm |)w_{1,t}(v)\Big] \nn\\
&\geq \inf_{y\in\R}\Big[\frac{1}{\delta}\big(y^{2}-L\big)^{+}-c(P^{+}+|\lm |)y\Big].
\end{align*}
Clearly,
\begin{align*}
\inf_{ y^{2}\leq 2L}\Big[\frac{1}{\delta}(y^{2}-L)^{+}-c(P^{+}+|\lm |)y\Big]
& \geq \inf_{ y^{2}\leq 2L} \Big[-c(P^{+}+|\lm |)y\Big]  \geq-c(P^{+}+|\lm |),
\end{align*}
and
\begin{align*}
\inf_{ y^{2}>2L}\Big[\frac{1}{\delta}(y^{2}-L)^{+}-c(P^{+}+|\lm |)y\Big]
& \geq \inf_{ y^{2}>2L} \Big[ \frac{1}{\delta}\Big(y^{2}-\hf y^{2}\Big)-c(P^{+}+|\lm |)y\Big] \\
& \geq \inf_{ y\in\R} \Big[ \frac{1}{2\delta}  y^{2}-c(P^{+}+|\lm |)y\Big]\\
&\geq-c(P^{+}+|\lm|)^{2},
\end{align*}
so
\begin{align}\label{lower}
\og_{1,t}^*(P^{+}, \lm) &\geq-c((P^{+}+|\lm|)^{2}+P^{+}+|\lm|).
\end{align}
On the other hand,
\begin{align}\label{upper2}
\quad \og_{1,t}^*(P^{+}, \lm)&\leq \og_{1,t}(0, P^{+}, \lm) \nn\\
&=f_{t}(1, 0)+\hf p(p-1)P^{+} |\sigma_{t}(1, 0)|^2+pP^{+} b_{t}(1, 0)+p\lm^{\top} \sigma_{t}(1, 0)\nn\\
&\leq c(1+P^{+}+|\lm|).
\end{align}
Combining the above two estimates, we have
\begin{align}\label{estimate1}
|\og_{1,t}^*(P^{+}, \lm)| \leq c(1+(P^{+})^{2}+|\lm|^2).
\end{align}

Let us consider the following linear BSDE with bounded coefficients:
\begin{align}\label{overP}
\begin{cases}
\dd\overline P=-\og_1(0,\overline P,\overline\lm)\dt+\overline \lm^{\top}\dw, \\
\overline P_T=g(1).
\end{cases}
\end{align}
Since $f_{t}(1,0)\geq0$ and $g(1)\geq0$, it admits a unique, nonnegative bounded solution $(\overline P,\overline \lm)$.

Take a constant $M>0$ an upper bound of $\overline P$.
Let $h:\R\rightarrow[0,1]$ be a smooth truncation function satisfying $h(x)=1$ for $x\in[-M,M]$, and $h(x)=0$ for $x\in({-}\infty,-2M]\cup[2M,\infty)$.
Since $h$ is compactly supported and bounded, it follows from \eqref{estimate1} that
\begin{align*}
|\og_{1,t}^*(P^{+}, \lm)|h(P)&\leq c(h(P)+(P^{+})^{2}h(P)+|\lm|^2h(P))\leq c(1+|\lm|^2).
\end{align*}
According to \cite[Theorem 2.3]{Ko}, there is a bounded, maximal solution $(\hat P,\hat \lm)$ to the following BSDE
\begin{align}\label{Ptruc}
\begin{cases}
\dd \hat P=-\og_{1}^*(\hat P^{+}, \hat\lm)h(\hat P)\dt+\hat\lm^{\top}\dw_t,\\
\hat P_T=g(1).
\end{cases}
\end{align}
Notice that $(\underline P,\underline \lm)=(0,0)$ is a solution to the following BSDE
\begin{align*}
\underline P_t=-\int_t^Tc((\underline P_{s}^{+}+|\underline\lm_{s}|)^{2}+\underline P_{s}^{+}+|\underline\lm_{s}|)h(\underline P_{s})\ds-\int_t^T\underline \lm_{s}^{\top}\dw_s,
\end{align*}
where $c$ is given in \eqref{lower}.
Then the maximal solution argument of \cite[Theorem 2.3]{Ko}, combined with \eqref{lower}, yields
\begin{align}\label{Pgeq}
\hat P\geq\underline P=0.
\end{align}
On the other hand, since $0\leq \overline P\leq M$, $h(\overline P)=1$. It follows from \eqref{overP} that
\begin{align*}
\begin{cases}
\dd\overline P=-\og_1(0,\overline P^{+},\overline\lm)h(\overline P)\dt+\overline \lm^{\top}\dw, \\
\overline P_T=g(1).
\end{cases}
\end{align*}
Applying comparison theorem to \eqref{Ptruc} and the above BSDE with Lipchitz continuous driver yields
\begin{align}\label{Pleq}
\hat P\leq \overline P\leq M.
\end{align}
Combining \eqref{Pgeq} and \eqref{Pleq}, we can assert that $h(\hat P)=1$. Together with \eqref{Ptruc} and \eqref{Pgeq}, we see $(\hat P,\hat \lm)$ is actually a solution to \eqref{P1}.
By virtue of Lemma \ref{lemmaBMO}, $\hat\lm\in\BMO$.

We are now ready to prove the uniqueness.
Suppose $(P, \lm)$ and $(\tilde P, \tilde\lm)$ are two solutions of \eqref{P1}. Then $0\leq P, \tilde P\leq M$ for some constant $M>0$, and $\lm,\tilde\lm\in\BMO$.

Define the processes
\begin{align*}
(U,V)=\left(\ln (P+a),\frac{\lm}{P+a}\right), \ (\tilde U,\tilde V)=\left(\ln (\tilde P+a),\frac{\tilde \lm}{\tilde P+a}\right),
\end{align*}
where $a>0$ is a small constant to be determined later.
Then $(U, V)$ and $(\tilde U, \tilde V)\in L^\infty_{\mathcal{F}}(0,T; \mathbb {R})\times \BMO$. Furthermore, by It\^{o}'s formula,
\begin{align*}
\label{U}
\begin{cases}
\dd U=-\Big[ \oh^*(U,V)+\frac{1}{2}V'V\Big]\dt+V'\dw,\\
U_T=\ln (g(1)+a),
\end{cases}
\end{align*}
where
\begin{align*}
\oh^*_{t}(U,V)=\inf_{v\in \Gamma}\oh_{t}(v,U,V),
\end{align*}
and
\begin{align*}
\oh_{t}(v,U,V)=&f_{t}(1,v)e^{-U}+\frac{1}{2}p(p-1)(1-ae^{-U})|\sigma_{t}(1,v)|^2+p(1-ae^{-U})b_{t}(1,v)+pV^{\top}\sigma_{t}(1,v).
\end{align*}
Noticing $U$ is bounded and using Assumption \ref{solvablecondition1}, we have
\begin{align*}
\oh_{t}(0,U,V)&\leq c(1+|V|).
\end{align*}
On the other hand, since $0\leq P\leq M$, $e^{-U}=\frac{1}{P+a}\geq\frac{1}{M+a}$ and $1-ae^{-U}=\frac{P}{P+a}\in[0,1)$. Since $p\geq 1$, it follows
\[\frac{1}{2}p(p-1)(1-ae^{-U})|\sigma_{t}(1,v)|^2\geq 0.\]
Therefore, thanks to Assumption \ref{solvablecondition1},
\begin{align*}
\oh_{t}(v,U,V)-\oh_{t}(0,U,V)
&\geq f_{t}(1,v)e^{-U}+p(1-ae^{-U})b_{t}(1,v)+pV^{\top}\sigma_{t}(1,v)-c(1+|V|)\\
&\geq \frac{ w_{1,t}(v)^{2}-L}{\delta(M+a)}-cw_{1,t}(v)(1+|V|)-c(1+|V|)\\
&\geq \frac{(w_{1,t}(v)+1)^{2}}{\delta(M+a)}-c(1+w_{1,t}(v))(1+|V|)\\
& >0,
\end{align*}
if $w_{1,t}(v)>c' (1+|V|)$ with $c'$ being a sufficiently large constant. So we conclude
\begin{align*}
\oh^*_{t}(U,V)=\inf_{w_{1,t}(v)\leq c(1+|V|)}\oh_{t}(v,U,V).
\end{align*}
Similar estimates hold if $(U,V)$ is replaced by $(\tilde U,\tilde V)$.

Set $\bar U=U-\tilde U$, $\bar V=V-\tilde V$. Then $(\bar U, \bar V)$ is a solution to the following BSDE,
\begin{align*}
\begin{cases}
\dd\bar U=-\Big[\oh^*(U,V)-\oh^*(\tilde U,\tilde V)+\frac{1}{2}(V+\tilde V)'\bar V
\Big]\dt+\bar V'\dw,\\
\bar U_T=0.
\end{cases}
\end{align*}
Applying It\^{o}'s formula to $\bar U^2$, we deduce that
\begin{align}
\label{Usquare}
\bar U_t^2&=\int_t^T\Big\{2\bar U\Big[\oh^*(U,V)-\oh^*(\tilde U,\tilde V)+\frac{1}{2}(V+\tilde V)'\bar V\Big]-\bar V'\bar V\Big\}\ds-\int_t^T2\bar U\bar V'\dw.
\end{align}

We now estimate $\bar U[\oh^*(U,V)-\oh^*(\tilde U,\tilde V)]$.
Let $c_{2}$ be a large positive constant to be chosen. Set
\begin{align*}
H_{t}(v, \tilde U, U, V)=&(f_{t}(1,v)+c_2)e^{-\tilde U}+\frac{1}{2}p(p-1)(1-ae^{-\tilde U})|\sigma_{t}(1,v)|^2-c_2e^{-U}\\
&+p(1-ae^{-U})b_{t}(1,v)
+pV^{\top}\sigma_{t}(1,v).
\end{align*}
Then we can rewrite $\oh$ in terms of $H$,
\begin{align}\label{ineqH1}
\oh^*_{t}(U, V)-\oh^*_{t}(\tilde U, \tilde V)&=\inf_{w_{1,t}(v)\leq c(1+|V|)} \oh_{t}(v, U, V)-\inf_{w_{1,t}(v)\leq c(1+|\tilde V|)}\oh_{t}(v, \tilde U, \tilde V)\nn\\
&=\inf_{w_{1,t}(v)\leq c(1+|V|)} H_{t}(v, U, U, V)-\inf_{w_{1,t}(v)\leq c(1+|\tilde V|)} H_{t}(v, \tilde U, \tilde U, \tilde V)\nn\\
&=\inf_{w_{1,t}(v)\leq c(1+|V|)} H_{t}(v, U, U, V)-\inf_{w_{1,t}(v)\leq c(1+|V|)}H_{t}(v, \tilde U, U, V)\nn\\
&\qquad\quad+\inf_{w_{1,t}(v)\leq c(1+|V|)}H_{t}(v, \tilde U, U, V)-\inf_{w_{1,t}(v)\leq c(1+|\tilde V|)} H_{t}(v, \tilde U, \tilde U, \tilde V).
\end{align}
Let's choose $a>0$ and $c_{2}$ such that
\[\frac{1}{2}p(p-1)a<\frac{1}{\delta}, ~~c_{2}>\frac{L}{\delta}.\]
Then by Assumption \ref{solvablecondition1},
\begin{align*}
f_{t}(1,v)+c_2-\frac{1}{2}p(p-1)a|\sigma_{t}(1,v)|^2&\geq \frac{1}{\delta}(w_{1,t}(v)^{2}-L)+c_2-\frac{1}{2}p(p-1)aw_{1,t}(v)^{2}>0.
\end{align*}
Consequently, for every given $(v, U, V)$, the function
\begin{align*}
\tilde U\mapsto H_{t}(v, \tilde U, U, V)
\end{align*}
is decreasing, so
\begin{align}\label{ineqL2}
\bar U\Big[\inf_{w_{1,t}(v)\leq c(1+|V|)} H_{t}(v, U, U, V)-\inf_{w_{1,t}(v)\leq c(1+|V|)}H_{t}(v, \tilde U, U, V)\Big]\leq 0.
\end{align}

On the other hand, noting that $U$ and $\tilde U$ are bounded and recalling Assumption \ref{solvablecondition1}, we have
\begin{align}\label{Hdiff}
&\quad\;\Big|\inf_{w_{1,t}(v)\leq c(1+|V|)}H_{t}(v, \tilde U, U, V)-\inf_{w_{1,t}(v)\leq c(1+|\tilde V|)} H_{t}(v, \tilde U, \tilde U, \tilde V)\Big|\nn\\
&\leq \sup_{w_{1,t}(v)\leq c(1+|V|+|\tilde V|)} \big|H_{t}(v, \tilde U, U, V)-H_{t}(v, \tilde U, \tilde U, \tilde V)\big|\nn\\
&=\sup_{w_{1,t}(v)\leq c(1+|V|+|\tilde V|)} \big|-c_2(e^{-U}-e^{-\tilde U})-pa(e^{-U}-e^{-\tilde U})b_{t}(1,v)+p\bar V^{\top}\sigma_{t}(1,v) \big|\nn\\
&\leq \sup_{w_{1,t}(v)\leq c(1+|V|+|\tilde V|)} c(|\bar U|+|\bar V|)(1+w_{1,t}(v))\nn\\
&\leq c (1+|V|+|\tilde V|)(|\bar U|+|\bar V|).
\end{align}
Therefore, we can define a process $\beta$, satisfying $|\beta|\leq c (1+|V|+|\tilde V|)$, accordingly $\beta\in\BMO$, in an obvious way such that
\begin{align}\label{ineqL3}
2\bar U\Big[\inf_{w_{1,t}(v)\leq c(1+|V|)} H_{t}(v, \tilde U, U, V)-\inf_{w_{1,t}(v)\leq c(1+|\tilde V|)}H_{t}(v, \tilde U, \tilde U, \tilde V)\Big]\leq |\beta|\bar U^2+2\beta^{\top}\bar U\bar V.
\end{align}
Moreover, notice that $U$ and $\tilde U$ are bounded and combing \eqref{ineqH1}-\eqref{ineqL3}, we have
\begin{align*}\label{ineqL4}
2\bar U\Big[\oh^*_{t}(U,V)-\oh^*_{t}(\tilde U,\tilde V)+\frac{1}{2}(V+\tilde V)'\bar V\Big]-\bar V'\bar V\leq |\beta|\bar U^2+c\beta^{\top}\bar U\bar V.
\end{align*}
Together with \eqref{Usquare}, we obtain
\begin{align*}
\bar U_t^2&\leq \int_t^T (|\beta|\bar U^2+c\beta^{\top}\bar U\bar V)\ds-\int_t^T2\bar U\bar V'\dw.
\end{align*}
Now one can repeat the proof of \cite[Theorem 3.5]{HSX} to get $(\bar U,\bar V)=(0,0)$.
This completes the proof of uniqueness.
\end{proof}

The second condition  covers two important cases: (1) the volatility coefficient $\sigma_{t}$ does not depend on the control; (2) both the drift coefficient $b(\pm1,\cdot)$ and volatility coefficient $\sigma(\pm1,\cdot)$ are bounded.
\begin{assmp}\label{solvablecondition2}
It holds that
\[f(\pm1,0)\in L^{\infty}_{\mathbb{F}}(0,T;\R_{+}),~g(\pm1)\in L^{\infty}_{\mathcal{F}_T}(\Omega;\R_{+}),\]
and
\[f(0, v),~f(\pm1,v)\in L^{0}_{\mathbb{F}}(0,T;\R_{+}), \]
for any $v\in\Gamma$.
Also, there is a constant $L\geq 0$ such that $|\sigma_{t}(\pm1,v)|\leq L$
holds for any $v\in\Gamma$ and $t\in[0,T]$;
for any constant $\ep>0$, there is a constant $L_{\ep}\geq 0$ such that
\begin{align*}
|b_{t}(\pm1,v)|\leq \ep f_{t}(\pm1,v)+L_{\ep}
\end{align*}
holds for any $v\in\Gamma$ and $t\in[0,T]$.
\end{assmp}
\begin{eg}
Suppose   $\alpha>1$,  $A$, $Q\in L^{\infty}_{\mathbb{F}}(0,T;\R)$, $B\in L^{\infty}_{\mathbb{F}}(0,T;\R^m)$, $C\in L^{\infty}_{\mathbb{F}}(0,T;\R^n)$,   $R\in L^{\infty}_{\mathbb{F}}(0,T;\R^{n\times m})$, $G\in L^{\infty}_{\mathcal{F}_T}(\Omega;\R)$, and $R^\top R\geq c\mathbf{1}_m$ for some constant $c>0$.
\begin{enumerate}[1.]
\item If $p\geq 2$, the following functions
\begin{align*}
b_t(x,u)=A_tx+B_t^\top u, \ \sigma_t(x,u)=C_tx, \ f_t(x,u)=|Q_tx|^{p}+|R_tu|^{p}, \ g(x)=|Gx|^p,
\end{align*}
satisfy Assumptions \ref{homocondition}  and \ref{solvablecondition2}.
If $\Gamma$ is symmetric, namely, $-v\in\Gamma$ whenever $v\in\Gamma$, then
\begin{align*}
\og_{1,t}^*(P, \lm)=\og_{2,t}^*(P, \lm)=\inf_{v\in\Gamma}\big[|R_tv|^p+pPB_t^\top v\big]+|Q|^p+\frac{1}{2}p(p-1)|C_t|^2+pPA_t+p\Lambda^{\top}C_t.
\end{align*}
In particular,
if  $\Gamma=\R^m$, i.e. there is no control constraint, then
\begin{align*}
\og_{1,t}^*(P, \lm)&=\og_{2,t}^*(P, \lm)=\og_{1,t}(P, \lm,\hat v_t)=\og_{2,t}(P, \lm,-\hat v_t)\\
&=(1-p)\big(B_t^{\top}(R^\top_tR_t)^{-1}B_t\big)^{\frac{p}{2(p-1)}}|P|^{\frac{p}{p-1}}
+|Q|^p+\frac{1}{2}p(p-1)|C_t|^2+pPA_t+p\Lambda^{\top}C_t,
\end{align*}
where
\begin{align*}
\hat v_t=-P|P|^{-\frac{p-2}{p-1}}\big(B_t^{\top}(R^\top_tR_t)^{-1}B_t\big)^{-\frac{p-2}{2(p-1)}}
(R^\top_tR_t)^{-1}B_t\idd{PB_t\neq0}.
\end{align*}

\item If $p\geq 1$, the following functions
\begin{gather*}
 b_t(x,u)=A_tx\sin(|u|/|x|)\idd{x\neq 0},\ \sigma_t(x,u)=C_tx\cos(|u|/|x|)\idd{x\neq 0}, \\
 f_t(x,u)=(|x|^{p}+|u|^{p})(|u|^{\alpha}/|x|^{\alpha})\idd{x\neq 0}, \ g(x)=|x|^p.
\end{gather*}
satisfy Assumptions \ref{homocondition} and \ref{solvablecondition2}.
\end{enumerate}
\end{eg}

\begin{thm}\label{thmcase2}
Suppose Assumptions \ref{homocondition} and \ref{solvablecondition2} hold and $p\geq1$.
Then there exists a unique nonnegative solution $(P,\lm)$ to \eqref{P1} (resp. to \eqref{P2}). Moreover, $\lm\in \BMO$.
\end{thm}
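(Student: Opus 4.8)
The plan is to imitate the proof of Theorem \ref{thmcase1} line by line, changing only the a priori estimates that rested on the coercivity in Assumption \ref{solvablecondition1}; as there, I treat \eqref{P1}, the case of \eqref{P2} being symmetric. First I solve the linear BSDE \eqref{overP}. Under Assumption \ref{solvablecondition2} the coefficients $f_{t}(1,0)$, $\sigma_{t}(1,0)$, $b_{t}(1,0)$ are all essentially bounded (the last because $|b_{t}(1,0)|\le\ep f_{t}(1,0)+L_{\ep}$ and $f(1,0)$ is bounded), while $f_{t}(1,0),g(1)\ge0$, so \eqref{overP} has a unique bounded solution $(\overline P,\overline\lm)$ with $0\le\overline P\le M$ for some constant $M>0$. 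I then fix the smooth truncation $h$ exactly as in Theorem \ref{thmcase1} and form the truncated BSDE \eqref{Ptruc}.

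The only genuinely new input is the estimate of the truncated driver on the window $P\in[-2M,2M]$. Since $|\sigma_{t}(1,v)|\le L$ for all $v$, the term $p\lm^{\top}\sigma_{t}(1,v)$ is merely linear in $\lm$ and, for $p\ge1$, $\hf p(p-1)P^{+}|\sigma_{t}(1,v)|^{2}\ge0$; absorbing the drift via $|b_{t}(1,v)|\le\ep f_{t}(1,v)+L_{\ep}$ with $\ep=\ep(M,p)$ small enough that $2pM\ep\le1$ gives $f_{t}(1,v)+pP^{+}b_{t}(1,v)\ge -pP^{+}L_{\ep}\ge -cP^{+}$, so that
\[-c(P^{+}+|\lm|)\;\le\;\og_{1,t}^*(P^{+},\lm)\;\le\;\og_{1,t}(0,P^{+},\lm)\;\le\;c(1+P^{+}+|\lm|)\qquad\text{for }P\in[-2M,2M].\]
Hence $|\og_{1,t}^*(P^{+},\lm)|\,h(P)\le c(1+|\lm|)$, so the truncated driver is continuous in $(P^{+},\lm)$ with at most linear growth in $\lm$, and \cite[Theorem 2.3]{Ko} produces a bounded maximal solution $(\hat P,\hat\lm)$ to \eqref{Ptruc}. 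The null pair $(0,0)$ solves the BSDE with driver $-c(P^{+}+|\lm|)h(P)$ and terminal value $0$, which is dominated by the driver of \eqref{Ptruc}, so the maximal-solution argument yields $\hat P\ge0$; and comparison with \eqref{overP}, whose driver is Lipschitz and dominates that of \eqref{Ptruc}, gives $\hat P\le\overline P\le M$. Thus $h(\hat P)\equiv1$, $\hat P^{+}=\hat P$, and $(\hat P,\hat\lm)$ solves \eqref{P1}; moreover $\hat\lm\in\BMO$ by Lemma \ref{lemmaBMO}, whose hypothesis \eqref{lemma:assu} holds here because $f(1,0)$ and $b(1,0)$ are bounded and $|\sigma_{t}(1,0)|\le L$.

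For uniqueness I would run the logarithmic-transform argument of Theorem \ref{thmcase1} with the same structure. Given two solutions $(P,\lm)$, $(\tilde P,\tilde\lm)$ — bounded by some $M$, with $\lm,\tilde\lm\in\BMO$ by Lemma \ref{lemmaBMO} — set $(U,V)=(\ln(P+a),\lm/(P+a))$ and $(\tilde U,\tilde V)=(\ln(\tilde P+a),\tilde\lm/(\tilde P+a))$ for a suitable constant $a>0$; then $U,\tilde U$ are bounded, $V,\tilde V\in\BMO$, and \ito gives the log-transformed BSDE with driver $\oh_{t}$ as in Theorem \ref{thmcase1}. The reduction $\oh^*_{t}(U,V)=\inf\{\oh_{t}(v,U,V):f_{t}(1,v)\le c(1+|V|)\}$ now follows by absorbing $p(1-ae^{-U})b_{t}(1,v)$ into $f_{t}(1,v)e^{-U}$, using $e^{-U}\ge(M+a)^{-1}$ and $\ep$ small together with $\hf p(p-1)(1-ae^{-U})|\sigma_{t}(1,v)|^{2}\ge0$ (valid for $p\ge1$ since $1-ae^{-U}=P/(P+a)\in[0,1)$) and $|\sigma_{t}(1,v)|\le L$; on the restricted set $|b_{t}(1,v)|\le\ep f_{t}(1,v)+L_{\ep}\le c(1+|V|)$ and $|\sigma_{t}(1,v)|\le L$, and likewise for $(\tilde U,\tilde V)$. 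With the auxiliary function $H_{t}(v,\tilde U,U,V)$ of Theorem \ref{thmcase1} and the choice $c_{2}\ge\hf p(p-1)aL^{2}$ — which replaces the conditions $\hf p(p-1)a<1/\delta$, $c_{2}>L/\delta$ there and still makes $\tilde U\mapsto H_{t}(v,\tilde U,U,V)$ decreasing because $|\sigma_{t}(1,v)|\le L$ — the splitting of $\oh^*_{t}(U,V)-\oh^*_{t}(\tilde U,\tilde V)$, the construction of a BMO process $\beta$ with $|\beta|\le c(1+|V|+|\tilde V|)$, and the application of \ito to $\bar U^{2}$ with $\bar U=U-\tilde U$, $\bar V=V-\tilde V$ go through verbatim, producing $\bar U_{t}^{2}\le\int_{t}^{T}(|\beta|\bar U^{2}+c\beta^{\top}\bar U\bar V)\ds-\int_{t}^{T}2\bar U\bar V^{\top}\dw$; the argument of \cite[Theorem 3.5]{HSX} then forces $(\bar U,\bar V)=(0,0)$, i.e.\ $(P,\lm)=(\tilde P,\tilde\lm)$.

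The main obstacle — and the only substantive departure from Theorem \ref{thmcase1} — is that Assumption \ref{solvablecondition2} provides no single inequality controlling $|b_{t}(\pm1,v)|$ and $|\sigma_{t}(\pm1,v)|$ together by $f_{t}(\pm1,v)$, so the global quadratic bound \eqref{estimate1} is unavailable. One must first secure the a priori bound $M$ on $P$ from the linear BSDE and only then choose $\ep=\ep(M)$ to absorb the drift into $f$; the resulting driver bounds hold only on the truncation window, but that is exactly where \eqref{Ptruc} lives, and since $|\sigma|$ is bounded the driver is there of linear growth in $\lm$, comfortably inside the scope of \cite[Theorem 2.3]{Ko}. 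Everything else — including the BMO-martingale uniqueness argument — is a routine adaptation with only the constants $\ep,a,c_{2}$ readjusted.
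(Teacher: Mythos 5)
Your proposal is correct and follows essentially the same route as the paper: solve the linear BSDE \eqref{overP} for an a priori bound $M$, truncate, invoke Kobylanski's maximal solution and comparison to get $0\le\hat P\le M$, and prove uniqueness via the log transform, the reduction of the infimum to the set $\{f_t(1,v)\le c(1+|V|)\}$, the $H$-function splitting and the BMO argument of \cite[Theorem 3.5]{HSX}. The only (harmless) deviation is in the driver's lower bound: the paper derives a global bound $\og_{1,t}^*(P^+,\lm)\ge m(P^+)-c|\lm|$ through the auxiliary concave function $m$, whereas you absorb the drift with an $\ep$ chosen after fixing $M$ to get a linear bound valid on the truncation window — both feed the same comparison with the null solution, and your explicit choice $c_2\ge\hf p(p-1)aL^2$ correctly supplies the monotonicity of $H$ in $\tilde U$ that the paper leaves implicit.
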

\begin{proof}
The proof is similar to that of Theorem \ref{thmcase1}. We only point out the main differences.
Thanks to Assumption \ref{solvablecondition2} and $p\geq 1$, we have
\begin{align}\label{lower2}
\og_{1,t}^*(P^{+}, \lm)
&\geq \inf_{v\in\Gamma}\Big[ f_{t}(1, v)+pP^{+} b_{t}(1, v)+p\lm^{\top} \sigma_{t}(1, v)\Big] \nn\nn\\
&\geq \inf_{v\in\Gamma}\Big[ f_{t}(1, v)-pP^{+} |b_{t}(1, v)|\Big]-\inf_{v\in\Gamma} f_{t}(1, v)-c|\lm| \nn\\
&=m(P^{+})-c|\lm|,
\end{align}
where we used the fact that $\inf_{v\in\Gamma} f_{t}(1, v)\geq 0$ and defined
\[m(x):=\inf_{v\in\Gamma}\Big[ f_{t}(1, v)-px |b_{t}(1, v)|\Big]-\inf_{v\in\Gamma} f_{t}(1, v),~~x\in\R.\]
We now show $m$ is a real-valued function on $({-}c,c)$ for any $c>0$.
Indeed, by taking $\ep=1/(pc)$ in Assumption \ref{solvablecondition2}, one has, for $x\in({-}c,c)$,
\begin{align*}
m(x) &\geq\inf_{v\in\Gamma}\Big[ f_{t}(1, v)-pc \Big(f_{t}(1, v)/(pc)+L_{1/(pc)}\Big)\Big]-f_{t}(1, 0)\\
&=-pcL_{1/(pc)}-f_{t}(1, 0)>-\infty.
\end{align*}
Meanwhile,
\begin{align*}
m(x)&\leq f_{t}(1, 0)-px |b_{t}(1, 0)|
\leq f_{t}(1, 0)+p|x| (f_{t}(1, 0)+L_{1})<\infty.
\end{align*}
Therefore, $m$ is a real-valued function on $\R$. It is also trivial to see that $m$ is non-increasing and concave (thus continuous) on $\R$ with $m(0)=0$. One can replace \eqref{lower} by the above lower bound \eqref{lower2} to show the existence.

To establish the uniqueness,
we choose constants $a,\ep>0$ such that $0<\ep p(1-ae^{-U})<\hf e^{-U}$. Then
\begin{align*}
\oh_{t}(v,U,V)-\oh_{t}(0,U,V)
&\geq f_{t}(1,v)e^{-U}+p(1-ae^{-U})b_{t}(1,v)+pV^{\top}\sigma_{t}(1,v)-c(1+|V|)\\
&\geq f_{t}(1,v)e^{-U}-p(1-ae^{-U}) (\ep f_{t}(1, v)+L_{\ep})-c(1+|V|)\\
&\geq \frac{1}{2}f_{t}(1,v)e^{-U}-c(1+|V|)>0
\end{align*}
if $f_{t}(1,v)\geq c'(1+|V|)$ with $c'$ being a sufficiently large constant. Hence,
\begin{align*}
\oh^*_{t}(U,V)=\inf_{f_{t}(1,v)\leq c(1+|V|)}\oh_{t}(v,U,V).
\end{align*}
Now we can replace \eqref{Hdiff} by
\begin{align*}
&\quad\;\Big|\inf_{f_{t}(1,v)\leq c(1+|V|)}H_{t}(v, \tilde U, U, V)-\inf_{f_{t}(1,v)\leq c(1+|\tilde V|)} H_{t}(v, \tilde U, \tilde U, \tilde V)\Big|\nn\\
&\leq \sup_{f_{t}(1,v)\leq c(1+|V|+|\tilde V|)} \big|H_{t}(v, \tilde U, U, V)-H_{t}(v, \tilde U, \tilde U, \tilde V)\big|\nn\\
&=\sup_{f_{t}(1,v)\leq c(1+|V|+|\tilde V|)} \big|-c_2(e^{-U}-e^{-\tilde U})-pa(e^{-U}-e^{-\tilde U})b_{t}(1,v)+p\bar V^{\top}\sigma_{t}(1,v) \big|\nn\\
&\leq c (1+|V|+|\tilde V|)(|\bar U|+|\bar V|),
\end{align*}
where we used $|b_{t}(\pm1,v)|\leq f_{t}(\pm1,v)+L_{1}$ to get the last inequality.
\end{proof}

Finally, we introduce the third condition, which corresponds to the singular case in LQ theory, see, e.g., \cite{HSX}, \cite{HZ}.
\begin{assmp}\label{solvablecondition3}
It holds that
\[f(\pm1,0)\in L^{\infty}_{\mathbb{F}}(0,T;\R_{+}),~\sigma(\pm1,0)\in L^{\infty}_{\mathbb{F}}(0,T;\R^{n}),~g(\pm1)\in L^{\infty}_{\mathcal{F}_T}(\Omega;\R_{+}),\]
and
\[f(0, v),~f(\pm1,v)\in L^{0}_{\mathbb{F}}(0,T;\R_{+}), \]
for any $v\in\Gamma$.
Also, there are constants $L$, $\delta> 0$ and $\eta>0$ such that
\begin{align}\label{delta}
|b_{t}(\pm1,v)|\leq \delta|\sigma_{t}(\pm1,v)|^{2}+L, ~~g(\pm1)\geq \eta,
\end{align}
holds for any $v\in\Gamma$ and $t\in[0,T]$.
\end{assmp}
\begin{eg}
Suppose $p>1$, $\underline A$, $\overline A$, $ Q$, $\tilde Q\in L^{\infty}_{\mathbb{F}}(0$, $T;\R)$, $\underline B$, $\overline B\in L^{\infty}_{\mathbb{F}}(0$, $T;\R^m)$, $\underline C$, $\overline C\in L^{\infty}_{\mathbb{F}}(0$, $T;\R^n)$, $D$, $R$, $\tilde R\in L^{\infty}_{\mathbb{F}}(0$, $T;\R^{n\times m})$, $G$, $\tilde G\in L^{\infty}_{\mathcal{F}_T}(\Omega;\R)$$, $ and $D^\top D\geq c\mathbf{1}_m$, $G\geq c$, $\tilde G\geq c$ for some constant $c>0$. Then, the following functions
\begin{gather*}
 b_t(x,u)=\underline A_tx^+-\overline A_t x^-+\underline B_t^\top u^+-\overline B_t^\top u^-, \ \sigma_t(x,u)=(\underline C_tx^+-\overline C_t x^-+ D_t u)^{\top}, \\
 f_t(x,u)=|Q_tx|^{p}+|R_tu|^{p}+|\tilde Q_tx|^{p/3}|\tilde R_tu|^{2p/3}, \ g(x)=G(x^+)^p+\tilde G(x^-)^p,
\end{gather*}
satisfy Assumptions \ref{homocondition} and \ref{solvablecondition3}.

\end{eg}

\begin{thm}\label{thmcase3}
Suppose Assumptions \ref{homocondition} and \ref{solvablecondition3} hold and $p>1+2\delta $. Then there exists a unique uniformly positive solution $(P,\lm)$ to \eqref{P1} (resp. to \eqref{P2}). Moreover, $\lm\in \BMO$.
\end{thm}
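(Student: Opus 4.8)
The plan is to solve \eqref{P1} by a logarithmic change of variables, the case of \eqref{P2} being entirely analogous. Since the sought solution is uniformly positive, one may set $(Y,\mu):=(\ln P,\,\lm/P)$, and \ito shows that a pair $(P,\lm)$ with $P$ uniformly positive solves \eqref{P1} if and only if $(Y,\mu)$ solves
\begin{align*}
Y_t=\ln g(1)+\int_t^T\Big[\ok_{1,s}^*(Y_s,\mu_s)+\hf|\mu_s|^2\Big]\ds-\int_t^T\mu_s^{\top}\dw_s,
\end{align*}
where $\ok_{1,t}(v,y,\mu):=f_t(1,v)e^{-y}+\hf p(p-1)|\sigma_t(1,v)|^2+pb_t(1,v)+p\mu^{\top}\sigma_t(1,v)$ and $\ok_{1,t}^*(y,\mu):=\inf_{v\in\Gamma}\ok_{1,t}(v,y,\mu)$; the terminal value $\ln g(1)$ is bounded since $\eta\le g(1)\le\|g(1)\|_\infty$. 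Using $0\in\Gamma$, the boundedness of $f_t(\pm1,0),\sigma_t(\pm1,0),b_t(\pm1,0)$, the signs $f_t(1,v)\ge0$, the growth bound $|b_t(1,v)|\le\delta|\sigma_t(1,v)|^2+L$, and — crucially — the hypothesis $p>1+2\delta$, I would establish
\begin{align*}
-\frac{p}{2(p-1-2\delta)}|\mu|^2-pL\;\le\;\ok_{1,t}^*(y,\mu)\;\le\;c\,e^{-y}+c+c|\mu|,
\end{align*}
the lower bound coming from minimising $\tfrac p2(p-1-2\delta)s^2-p|\mu|s$ over $s=|\sigma_t(1,v)|\ge0$ (finite precisely because $p-1-2\delta>0$), the upper bound from taking $v=0$; in particular the $Y$-generator is bounded below by $-\gamma|\mu|^2-pL$ with $\gamma:=\tfrac{1+2\delta}{2(p-1-2\delta)}>0$.

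Because of the $e^{-y}$ term the generator lacks quadratic growth in $y$, so existence proceeds in two stages. First I would replace $e^{-y}$ by the bounded continuous function $\phi_N(y):=e^{-y}\wedge e^{N}$; the resulting generator $\ok_{1,t}^{*,N}(y,\mu)+\hf|\mu|^2$ then has quadratic growth (with an $N$-dependent constant) while retaining the $y$-free lower bound $-\gamma|\mu|^2-pL$, so Kobylanski's theorem \cite[Theorem 2.3]{Ko} provides a bounded maximal solution $(Y^N,\mu^N)$. Since the \emph{deterministic} process $\underline Y_t:=\ln\eta-pL(T-t)$ solves the $y$-independent purely quadratic BSDE with generator $-\gamma|\mu|^2-pL$ and terminal $\ln\eta$ (obtained explicitly via the exponential transform $e^{-2\gamma\,\cdot}$) and is a subsolution of the truncated equation, the maximal solution argument gives $Y^N\ge\underline Y\ge\ln\eta-pLT=:c_1$, uniformly in $N$. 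Hence for $N$ large the truncation is inactive, $(Y,\mu):=(Y^N,\mu^N)$ solves the untruncated $Y$-BSDE, and $Y\ge c_1$. The upper estimate together with $Y\ge c_1$ now yields $\ok_{1,t}^*(Y,\mu)+\hf|\mu|^2\le c'+c|\mu|^2$, a genuine quadratic bound independent of $y$; comparison with the corresponding explicit bounded (deterministic) supersolution gives $Y\le c_2$ for some constant. Thus $Y$ is bounded, $P:=e^Y\in[e^{c_1},e^{c_2}]$ is uniformly positive, $\lm:=e^Y\mu$, and $(P,\lm)$ solves \eqref{P1}; since Assumption \ref{solvablecondition3} implies the boundedness hypothesis \eqref{lemma:assu}, Lemma \ref{lemmaBMO} yields $\lm\in\BMO$.

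For uniqueness, let $(P,\lm)$ and $(\tilde P,\tilde\lm)$ be two uniformly positive solutions; both are bounded with $\lm,\tilde\lm\in\BMO$, and I pass to $(Y,\mu)=(\ln P,\lm/P)$ and $(\tilde Y,\tilde\mu)=(\ln\tilde P,\tilde\lm/\tilde P)$ — no additive shift as in the proof of Theorem \ref{thmcase1} is needed, since $P,\tilde P$ are already bounded away from $0$. Arguing as there, one uses $p>1+2\delta$ to show that all the infima below may be restricted to the common set $\mathscr{B}_t:=\{v\in\Gamma:|\sigma_t(1,v)|\le c(1+|\mu|+|\tilde\mu|)\}$, and splits, with $\bar Y:=Y-\tilde Y$ and $\bar\mu:=\mu-\tilde\mu$,
\begin{align*}
\ok_{1,t}^*(Y,\mu)-\ok_{1,t}^*(\tilde Y,\tilde\mu)=\Big[\inf_{v\in\mathscr{B}_t}\ok_{1,t}(v,Y,\mu)-\inf_{v\in\mathscr{B}_t}\ok_{1,t}(v,\tilde Y,\mu)\Big]+\Big[\inf_{v\in\mathscr{B}_t}\ok_{1,t}(v,\tilde Y,\mu)-\inf_{v\in\mathscr{B}_t}\ok_{1,t}(v,\tilde Y,\tilde\mu)\Big].
\end{align*}
Since $\ok_{1,t}(v,\cdot,\mu)$ is nonincreasing (because $f_t(1,v)\ge0$), $\bar Y$ times the first bracket is $\le0$; in the second bracket the (possibly large) term $f_t(1,v)e^{-\tilde Y}$ cancels, so it is a difference of infima of quantities differing by $p\bar\mu^{\top}\sigma_t(1,v)$ and is therefore bounded by $c(1+|\mu|+|\tilde\mu|)|\bar\mu|$, i.e. only \emph{linearly} in $|\mu|,|\tilde\mu|$. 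Applying \ito to $\bar Y^2$ then produces an inequality of the same shape as at the end of the proof of Theorem \ref{thmcase1}, namely $\bar Y_t^2\le\int_t^T\big(\beta_s^{\top}\bar\mu_s\bar Y_s-|\bar\mu_s|^2\big)\ds-\int_t^T2\bar Y_s\bar\mu_s^{\top}\dw_s$ with $\beta\in\BMO$, and $\bar Y\equiv0$ follows by repeating the argument of \cite[Theorem 3.5]{HSX}.

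The main obstacle is the interplay between the exponential nonlinearity $e^{-Y}$ — which destroys quadratic growth in $y$ and forces the truncate–then–remove scheme — and the need for a genuine uniform positive lower bound on $P$: the lower bound is exactly what deactivates the truncation, and it is available only thanks to $g(\pm1)\ge\eta>0$ together with the strict inequality $p>1+2\delta$; if $p\le1+2\delta$ the quadratic $\tfrac p2(p-1-2\delta)s^2-p|\mu|s$ is no longer bounded below in $s$ and $\ok_{1,t}^*$ may drop to $-\infty$. For uniqueness the corresponding delicate point is to verify that the error term in the generator difference depends only linearly (not quadratically) on the martingale integrands — which is what the cancellation of the $f_t(1,v)e^{-\tilde Y}$ terms provides — so that the BMO machinery of \cite[Theorem 3.5]{HSX} still applies.
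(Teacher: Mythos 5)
Your proposal is correct, and its uniqueness part is essentially the paper's argument; the existence part takes a genuinely different, though closely related, route. The paper works at the level of $P$ itself: it truncates the generator with a cutoff $h(P)$ supported on $\{P\geq \tfrac12\eta e^{-cT}\}$, notes that on this set the lower bound $\og_{1}^*(P,\lm)\geq -c(P+|\lm|^2/P)$ (valid exactly because $p>1+2\delta$) has quadratic growth in $\lm$, applies Kobylanski's maximal-solution theorem to the truncated $P$-equation, and removes the truncation by comparison with the explicit subsolution $\underline P_t=\eta e^{-c(T-t)}$, which simultaneously delivers the uniform positivity. You instead log-transform first, truncate the exponential coefficient $e^{-y}$ by $e^{-y}\wedge e^{N}$, apply Kobylanski in the $Y$-variable, and deactivate the truncation through the deterministic subsolution $\ln\eta-pL(T-t)$; uniform positivity of $P=e^{Y}$ is then the lower bound on $Y$. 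Both schemes hinge on the same two ingredients used in the same way ($p>1+2\delta$ to bound the generator below after absorbing $pPb$ and $p\lm^{\top}\sigma$ into $\tfrac12 p(p-1)P|\sigma|^2$, and $g(\pm1)\geq\eta$ to feed the subsolution), so neither is more general, but yours has the advantage of doing existence and uniqueness in one set of variables, while the paper's avoids ever handling the exponential nonlinearity in $y$. For uniqueness you follow the paper's Case III argument (log transform with no additive shift since $P,\tilde P$ are uniformly positive, restriction of the infimum to $|\sigma_t(1,v)|\leq c(1+|\mu|+|\tilde\mu|)$, monotonicity in $y$, then the BMO argument of \cite[Theorem 3.5]{HSX}); your use of a single common restriction set and the direct observation that $y\mapsto\ok_{1,t}(v,y,\mu)$ is nonincreasing (so the auxiliary $c_2$-shift in the paper's function $H$ is unnecessary when $a=0$) is a small simplification, and the resulting error term being linear in $|\bar\mu|$ with a BMO coefficient is exactly what the cited machinery needs. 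Minor redundancies aside (the extra comparison giving $Y\leq c_2$ is not needed, since Kobylanski already yields a bounded maximal solution for the fixed large $N$), the argument is complete; do make explicit, as the paper does implicitly, that Assumption \ref{solvablecondition3} gives \eqref{lemma:assu} so that Lemma \ref{lemmaBMO} applies to both solutions before the uniqueness step.
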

\begin{proof}
Since $p>1+2\delta$, there is a constant $\ep>0$ such that $p-1-2\delta-2\ep>0$. For $P>0$, we have
\[\lm^{\top} \sigma_{t}(1, v)\leq \ep P |\sigma_{t}(1, v)|^{2}+\frac{|\lm |^{2}}{4\ep P},\]
so it follows from Assumption \ref{solvablecondition3} that
\begin{align}\label{Hlowersin}
\og_{1,t}^*(P, \lm)
&= \inf_{v\in\Gamma}\Big[ \hf p(p-1)P |\sigma_{t}(1, v)|^2+f_{t}(1, v)+pP b_{t}(1, v)+p\lm^{\top} \sigma_{t}(1, v)\Big]\nn\\
&\geq \inf_{v\in\Gamma}\Big[ \hf p(p-1)P |\sigma_{t}(1, v)|^2-p P(\delta|\sigma_{t}(1,v)|^{2}+L)-\ep p P |\sigma_{t}(1, v)|^{2}-\frac{p|\lm |^{2}}{4\ep P}\Big]\nn\\
&=\inf_{v\in\Gamma} \hf(p-1-2\delta-2\ep)|\sigma_{t}(1, v)|^2pP-LpP-\frac{p|\lm |^{2}}{4\ep P}\nn\\
&\geq-LpP-\frac{p|\lm |^{2}}{4\ep P}\nn\\
&\geq-c(P+|\lm|^{2}/P),
\end{align}
where we used $p-1-2\delta-2\ep$, $p$, $P>0$ to get the the second inequality.

Let $c$ be the constant given in \eqref{Hlowersin}. Let $h:\R\rightarrow[0,1]$ be a smooth truncation function satisfying $h(x)=0$ for $x\in({-}\infty,\frac{1}{2}\eta e^{-cT}]$, and $h(x)=1$ for $x\geq \eta e^{-cT}$.
Noting \eqref{upper2} and \eqref{Hlowersin},
according to \cite[Theorem 2.3]{Ko}, there exists a bounded maximal solution $(\hat P, \hat \lm)$ to the following BSDE
\begin{align}
\label{P5}
\begin{cases}
\dd \hat P=-\og_1^*(\hat P,\hat \lm)h(\hat P) \dt+\hat \lm'\dw,\\
\hat P_T=g(1).
\end{cases}
\end{align}
Notice that $(\underline P, \underline\lm)=(\eta e^{-c(T-t)}, 0)$ satisfies the following BSDE:
\begin{align*}
\begin{cases}
\dd \underline P=c(\underline P+|\underline\lm|^{2}/\underline P)h(\underline P)\dt+\underline\lm'\dw,\\
\underline P_T=\eta.
\end{cases}
\end{align*}
Then the maximal argument in \cite[Theorem 2.3]{Ko} applied to the above two BSDEs yields
\[
\hat P\geq \underline P\geq \eta e^{-cT}.
\]
It hence follows $h(\hat P)=1$.
Together with \eqref{P5}, we see $(\hat P, \hat \lm)$ is actually a uniformly positive solution to \eqref{P1}.

To establish the uniqueness, notice $\hat P$ is uniformly positive, so we can take $a=0$ in the proof of Theorem \ref{thmcase1}.
  Assumption \ref{solvablecondition3} leads to
\begin{align*}
|b_{t}(\pm1,v)|\leq |\sigma_{t}(\pm1,v)|^{2}(\delta+\ep)
\end{align*}
when $|\sigma_{t}(\pm1,v)|^{2}\geq L/\ep$.
Therefore,
\begin{align*}
&\quad\;\oh_{t}(v,U,V)-\oh_{t}(0,U,V)\\
&\geq f_{t}(1,v)e^{-U}+\frac{1}{2}p(p-1)|\sigma_{t}(1,v)|^2+pb_{t}(1,v)+pV^{\top}\sigma_{t}(1,v)-c(1+|V|)\\
&\geq \hf (p-1-2\delta-2\ep) p|\sigma_{t}(1,v)|^2-pL-p|V||\sigma_{t}(1,v)|-c(1+|V|)>0,
\end{align*}
if $|\sigma_{t}(1,v)|>c'(1+|V|)$ with $c'$ being a sufficiently large constant.
Hence,
\begin{align*}
\oh^*_{t}(U,V)=\inf_{|\sigma_{t}(1,v)|\leq c(1+|V|)}\oh_{t}(v,U,V).
\end{align*}
Recall that $a=0$, so we can replace \eqref{Hdiff} by the estimate
\begin{align*}
&\quad\;\Big|\inf_{|\sigma_{t}(1,v)|\leq c(1+|V|)}H_{t}(v, \tilde U, U, V)-\inf_{|\sigma_{t}(1,v)|\leq c(1+|\tilde V|)} H_{t}(v, \tilde U, \tilde U, \tilde V)\Big|\nn\\
&\leq \sup_{|\sigma_{t}(1,v)|\leq c(1+|V|+|\tilde V|)} \big|H_{t}(v, \tilde U, U, V)-H_{t}(v, \tilde U, \tilde U, \tilde V)\big|\nn\\
&=\sup_{|\sigma_{t}(1,v)|\leq c(1+|V|+|\tilde V|)} \big|-c_2(e^{-U}-e^{-\tilde U})+p\bar V^{\top}\sigma_{t}(1,v) \big|\nn\\
&\leq c (1+|V|+|\tilde V|)(|\bar U|+|\bar V|).
\end{align*}
Hence, the estimate \eqref{Hdiff} still holds and the reminder argument is similar as before.
\end{proof}

\section{Verification: solution to the  problem \eqref{state1}-\eqref{prob1}}\label{Verification}
In this section, we will solve the stochastic control problem \eqref{state1}-\eqref{prob1} by providing an optimal control expressed by the solutions to BSDEs \eqref{P1} and \eqref{P2} obtained in the previous section.

\begin{thm}\label{control}
Suppose  one of the following cases holds:
\begin{description}
\item[\qquad Case I] Assumptions \ref{homocondition} and \ref{solvablecondition1} hold and $p>1$;
\item[\qquad Case II] Assumptions \ref{homocondition} and \ref{solvablecondition2} hold and $p>1$;
\item[\qquad Case III] Assumptions \ref{homocondition} and \ref{solvablecondition3} hold and $p>1+2\delta$.
\end{description}
Then the problem \eqref{state1}-\eqref{prob1} admits an optimal feedback control of time $t$ and state $X$:
\begin{align}
\label{opticon}
u^*(t,X)=\hat v_{1,t}X^++\hat v_{2,t}X^-;
\end{align}
with the corresponding optimal value:
\begin{align*}
\inf_{u\in\mathcal{U}}\; J(x,u)=P_{1,0}(x^+)^p+P_{2,0}(x^-)^p,
\end{align*}
where $(P_1, \lm_1)$ and $(P_2, \lm_2)$ are the unique nonnegative solutions (in Case I and Case II) and uniformly positive solutions (in Case III) to \eqref{P1} and \eqref{P2}, respectively;
and $\hat v_{1}$ and $\hat v_{2}$ are any $\mathcal{P}$-measurable processes such that
\begin{align*}
\hat v_{1,t}\in\Gamma,~~\og_{1,t}(\hat v_{1,t}, P_{1,t}, \lm_{1,t})=\og_{1,t}^*(P_{1,t}, \lm_{1,t}),\\
\hat v_{2,t}\in\Gamma,~~\og_{2,t}(\hat v_{2,t}, P_{2,t}, \lm_{2,t})=\og_{2,t}^*(P_{2,t}, \lm_{2,t}).
\end{align*}
\end{thm}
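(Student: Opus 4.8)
The plan is to prove optimality via a direct verification argument: I will show that along any admissible control the cost dominates $P_{1,0}(x^+)^p+P_{2,0}(x^-)^p$, and that equality is attained by the feedback control $u^*$ in \eqref{opticon}. The central object is the process
\[
Y_t := P_{1,t}(X_t^+)^p + P_{2,t}(X_t^-)^p,
\]
where $X$ is the state generated by an arbitrary admissible $u\in\mathcal U$. First I would apply Tanaka's formula to $X_t^+$ and $X_t^-$ separately, exploiting the homogeneity: writing $b_t(X_t,u_t)=X_t^+ b_t(1,u_t/X_t^+)-X_t^- b_t(-1,u_t/X_t^-)+\cdots$ via \eqref{homopro}, the drift and volatility of $X^+$ (resp.\ $X^-$) are controlled on $\{X>0\}$ (resp.\ $\{X<0\}$) and vanish on the complementary set. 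A crucial preliminary observation — inherited from the homogeneity, as stressed in the introduction — is that the translated state never crosses $0$: on $\{X>0\}$ the dynamics of $X$ involve only $b_t(1,\cdot)$, $\sigma_t(1,\cdot)$, which are homogeneous of degree $1$ and vanish at $X=0$, so $0$ is not attainable from a positive start, and symmetrically for negative starts. This justifies treating the $x>0$ and $x<0$ cases independently (and the $x=0$ case is trivial since all coefficients vanish at the origin).

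Next I would apply Itô's formula to $P_{1,t}(X_t^+)^p$ on $\{X>0\}$. Using the BSDE \eqref{P1} for $\dd P_{1,t}$ and the SDE for $X^+$, the $\dt$-terms collect into exactly
\[
\Big[\og_{1,t}(u_t/X_t^+,P_{1,t},\lm_{1,t}) - \og_{1,t}^*(P_{1,t},\lm_{1,t})\Big](X_t^+)^p \ge 0
\]
minus $f_t(X_t,u_t)$ (the latter reconstructed via homogeneity), by the very definition of $\og_{1,t}^*$ as the infimum of $\og_{1,t}(v,\cdot,\cdot)$ over $v\in\Gamma$. Doing the same for $P_{2,t}(X_t^-)^p$ and adding, I obtain
\[
Y_0 = \E\Big[g(X_T) + \int_0^T f_t(X_t,u_t)\,\dt\Big] - \E\Big[\int_0^T (\text{nonnegative integrand})\,\dt\Big] + (\text{stochastic integral}),
\]
so that $Y_0 = P_{1,0}(x^+)^p + P_{2,0}(x^-)^p \le J(x,u)$ once the stochastic integral is shown to be a true martingale (or at least to have nonpositive expectation). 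Equality holds precisely when $u_t = \hat v_{1,t} X_t^+ + \hat v_{2,t} X_t^-$, i.e.\ for the proposed feedback control, which makes the nonnegative integrand vanish $\dd\mathbb P\otimes\dt$-a.e.

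The main obstacles are the integrability/localization issues and the admissibility of $u^*$. For the lower bound one must justify that the local martingale term, which involves $\lm_{1,t}^\top(X_t^+)^p$ and $P_{1,t}(X_t^+)^{p-1}\sigma_t$-type integrands, has nonpositive conditional expectation; here I would localize by stopping times $\tau_k\uparrow T$, use $P_1\in S^\infty$, $\lm_1\in\BMO$ (Lemma \ref{lemmaBMO} and Theorems \ref{thmcase1}--\ref{thmcase3}), the admissibility bound $\E[\sup_t|X_t|^p]<\infty$, and pass to the limit by Fatou together with the sign of the dropped integrand — in the three cases the growth conditions on $b,\sigma$ relative to $f$ guarantee the running integrals are integrable. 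The genuinely delicate point is showing the closed-loop SDE
\[
\dd X_t = b_t(X_t,\hat v_{1,t}X_t^++\hat v_{2,t}X_t^-)\,\dt + \sigma_t(X_t,\hat v_{1,t}X_t^++\hat v_{2,t}X_t^-)^\top\dw_t
\]
has a unique solution with $\E[\sup_t|X_t|^p]<\infty$ and that $u^*\in\mathcal U$: by homogeneity this reduces on $\{X>0\}$ to a linear SDE with random coefficients $b_t(1,\hat v_{1,t})$, $\sigma_t(1,\hat v_{1,t})$, whose exponential-moment control comes again from the $\BMO$ property of $\lm_1$ and the case-specific bounds (for Case III one additionally uses uniform positivity of $P_1$); the measurable selection of $\hat v_{1},\hat v_2$ achieving the infimum is possible because $\og_{1,t}(\cdot,P,\lm)$ is continuous on $\Gamma$ and, under the standing assumptions, the infimum is attained on a compact subset of $\Gamma$ (as shown inside the proofs of Theorems \ref{thmcase1}--\ref{thmcase3}). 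Once admissibility is in hand, applying the identity above to $u^*$ gives $J(x,u^*) = P_{1,0}(x^+)^p+P_{2,0}(x^-)^p$, completing the proof.
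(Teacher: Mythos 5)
Your proposal follows essentially the same route as the paper's proof: a verification argument applying the It\^o--Tanaka (BDHPS) formula to $P_{1,t}(X_t^+)^p+P_{2,t}(X_t^-)^p$, using the definition of $\og_{1}^*,\og_2^*$ as infima to get a nonnegative residual drift, localizing and passing to the limit (monotone/Fatou plus the admissibility bound $\E[\sup_t|X_t|^p]<\infty$ and boundedness of $P_1,P_2$) for the lower bound, selecting $\hat v_1,\hat v_2$ measurably on the region where the infimum is attained, solving the closed-loop SDE as a linear SDE with random coefficients, and establishing $\E[\sup_t|X_t^*|^p]<\infty$ by case-specific estimates (BMO exponential moments in Case II, uniform positivity of $P_1,P_2$ in Case III) --- this is exactly the paper's Steps 1--3.

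One caveat: your ``crucial preliminary observation'' that the state never reaches or crosses $0$ under an \emph{arbitrary} admissible control is false in general, since joint homogeneity of degree $1$ in $(x,u)$ only forces $b_t(0,0)=\sigma_t(0,0)=0$; for $u\neq 0$ the coefficients $b_t(0,u),\sigma_t(0,u)$ need not vanish, so a generic admissible state can change sign. Fortunately this remark is not load-bearing: your actual computation (like the paper's) works with the summed process $P_{1,t}(X_t^+)^p+P_{2,t}(X_t^-)^p$ via the Tanaka-type formula for $(X^\pm)^p$ with $p>1$, which is valid across sign changes; non-crossing is only true --- and only needed --- for the closed-loop state $X^*$, whose explicit exponential form gives it for free.
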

\begin{proof}
In any case, we know the existence and uniqueness of $(P_1, \lm_1)$ and $(P_2, \lm_2)$ by Theorems \ref{thmcase1}, \ref{thmcase2} or \ref{thmcase3}.
In particular, we have the finiteness of $\og_{1,t}^*(P_{1,t}, \lm_{1,t})$ and $\og_{2,t}^*(P_{2,t}, \lm_{2,t})$.
Thanks to the homogeneity, we also have $f_{t}\geq0$ in all cases.

\textbf{Step 1.}
We first show the existence of $\hat v_{1}$ and $\hat v_{2}$ and provide some estimates for them. In Case I, we have, recalling $w_{1,t}(v)=\max\{|b_{t}(1,v)|, |\sigma_{t}(1,v)|\}$,
\begin{align*}
\og_{1}(v, P_1,\lm_1)-\og_{1}^*(P_1,\lm_1)
&\geq\og_{1}(v, P_1,\lm_1)-\og_{1}(0, P_1,\lm_1)\\
&\geq f_{t}(1, v)+pP_1 b_{t}(1, v)+p\lm_1^{\top} \sigma_{t}(1, v)-c(1+|\lm_{1}|)\nn\\
&\geq \frac{1}{\delta}(w_{1,t}(v)^{2}-L)-c(1+|\lm_{1} |)(1+w_{1,t}(v))> 0,
\end{align*}
if $w_{1,t}(v)>c'(1+|\lm_{1}|)$ with $c'$ being a sufficiently large constant.
Similarly, we have $\og_{2}(v, P_2,\lm_2)-\og_{2}^*(P_2,\lm_2)>0$ if $w_{2,t}(v)>c'(1+|\lm_{2}|)$.
Thanks to the finiteness and continuity of $\og_{1}$ and $\og_{2}$ w.r.t. $v$, we see that $\hat v_{1}$ and $\hat v_{2}$ exist and are real-valued and satisfy
\begin{align*}
w_{1,t}(\hat v_{1})\leq c(1+|\lm_1|),~~w_{2,t}(\hat v_{2})\leq c(1+|\lm_2|),
\end{align*}
so
\begin{align}\label{sigbup1}
|b_{t}(1,\hat v_{1})|+|\sigma_{t}(1,\hat v_{1})|\leq c(1+|\lm_1|), ~~
|b_{t}(-1,\hat v_{2})|+|\sigma_{t}(-1,\hat v_{2})|\leq c(1+|\lm_2|).
\end{align}
The existence and finiteness of $\hat v_{1}$ and $\hat v_{2}$ in Cases II and III can be established similarly.
In Case II, one can show that
\begin{align}\label{hatvupp2}
f_{t}(1,\hat v_{1})\leq c(1+|\lm_1|), ~~
f_{t}(-1,\hat v_{2})\leq c(1+|\lm_2|),
\end{align}
which together with Assumption \ref{solvablecondition2} implies
\begin{align}\label{sigbup1case2}
|b_{t}(1,\hat v_{1})|\leq c(1+|\lm_1|), ~~
|b_{t}(-1,\hat v_{2})|\leq c(1+|\lm_2|), ~~
|\sigma_{t}(1,\hat v_{1})|+|\sigma_{t}(-1,\hat v_{2})|\leq c.
\end{align}
In Case III, one can show that
\begin{align}\label{sigup1}
|\sigma_{t}(1, \hat v_{1})|\leq c(1+|\lm_1|),
~~|\sigma_{t}(-1, \hat v_{2})|\leq c(1+|\lm_2|),
\end{align}
which together with Assumption \ref{solvablecondition3} implies
\begin{align}\label{bup1}
|b_{t}(1, \hat v_{1})|\leq c(1+|\lm_1|^{2}),
~~|b_{t}(-1, \hat v_{2})|\leq c(1+|\lm_2|^{2}).
\end{align}

\textbf{Step 2.} Next we establish the following lower bound
\begin{align}\label{localoptimal}
J(X, u)=\E\Big[\int_0^{T} f_{t}(X_t, u_t)\dt+g(X_{T})\Big] &\geq P_{1,0}(x^+)^p+P_{2,0}(x^-)^p,
\end{align}
for any admissible control $u\in\mathcal{U}$.

Note we have $p>1$ in all the three cases, so for any admissible control $u\in\mathcal{U}$, by \cite[Lemma 2.2]{BDHPS},
\begin{align*}
\dd\: (X^{+}_{t})^{p}&=p(X^{+}_{t})^{p-1}(b_{t}(X_t, u_t)\dt+\sigma_{t}(X_t, u_t)^{\top}\dw_t)\\
&\qquad\qquad\;+\hf p(p-1)(X^{+}_{t})^{p-2}\idd{X_t>0}|\sigma_{t}(X_t, u_t)|^2\dt,\\
-\dd\: (X^{-}_{t})^{p}&=p(X^{-}_{t})^{p-1}(b_{t}(X_t, u_t)\dt+\sigma_{t}(X_t, u_t)^{\top}\dw_t)\\
&\qquad\qquad\;+\hf p(p-1)(X^{-}_{t})^{p-2}\idd{X_t<0}|\sigma_{t}(X_t, u_t)|^2\dt.
\end{align*}
Applying \ito to $P_{1,t}(X^{+}_{t})^{p}$ and $P_{2,t}(X^{-}_{t})^{p}$ respectively, we get
\begin{align*}
\dd\: (P_{1,t}(X^{+}_{t})^{p})
&=\idd{X_t>0}\Big[\hf p(p-1)P_{1,t}(X^{+}_{t})^{p-2}|\sigma_{t}(X_t, u_t)|^2-(X^{+}_{t})^{p}\og^{*}_{1,t}(P_{1,t}, \lm_{1,t})\\
&\qquad\qquad\;+p(X_t^+)^{p-1}(P_{1,t}b_{t}(X_t,u_t)+\lm_{1,t}^{\top}\sigma_{t}(X_t,u_t))\Big]\dt\\
&\qquad\;+\Big[(X_t^+)^{p}\lm_{1,t}+pP_{1,t}(X_t^+)^{p-1}\sigma_{t}(X_t,u_t)\Big]^{\top}\dw_t,
\end{align*}
and
\begin{align*}
\dd\: (P_{2,t}(X^{-}_{t})^{p})
&=\idd{X_t<0}\Big[\hf p(p-1)P_{2,t}(X^{-}_{t})^{p-2}|\sigma_{t}(X_t, u_t)|^2-(X^{-}_{t})^{p}\og^{*}_{2,t}(P_{2,t}, \lm_{2,t})\\
&\qquad\qquad\;-p(X_t^-)^{p-1}(P_{2,t}b_{t}(X_t,u_t)+\lm_{2,t}^{\top}\sigma_{t}(X_t,u_t))\Big]\dt\\
&\qquad\;+\Big[(X_t^-)^{p}\lm_{2,t}-pP_{2,t}(X_t^-)^{p-1}\sigma_{t}(X_t,u_t)\Big]^{\top}\dw_t.
\end{align*}
By standard construction, there is a sequence of increasing stopping times $\{\tau_{k}\}$ with $\lim_{k}\tau_{k}=T$ such that, after summing the above two equations,
\begin{align}\label{sum}
&\quad\;\E\Big[\int_0^{\tau_{k}} f_t(X_t, u_t)\dt+P_{1,\tau_{k}}(X^{-}_{\tau_{k}})^{p}+P_{2,\tau_{k}}(X^{-}_{\tau_{k}})^{p}\Big]\nn\\
&=P_{1,0}(x^+)^p+P_{2,0}(x^-)^p+\mathbb{E}\int_0^{\tau_{k}}\varphi_{t}(X_t,u_t)\dt,
\end{align}
where
\begin{align*}
\varphi_{t}(x,u)=f_{t}(x,u) &+\idd{x>0}\hf p(p-1)P_{1,t}(x^{+})^{p-2}|\sigma_{t}(x, u)|^2-(x^{+})^{p}\og^{*}_{1,t}(P_{1,t}, \lm_{1,t})\\
&\qquad\quad\;+p(x^+)^{p-1}(P_{1,t} b_{t}(x,u)+\lm_{1,t}^{\top}\sigma_{t}(x,u))\\
&+\idd{x<0}\hf p(p-1)P_{2,t}(x^{-})^{p-2}|\sigma_{t}(x, u)|^2-(x^{-})^{p}\og^{*}_{2,t}(P_{2,t}, \lm_{2,t})\\
&\qquad\quad\;-p(x^-)^{p-1}(P_{2,t}b_{t}(x,u)+\lm_{2,t}^{\top}\sigma_{t}(x,u)).
\end{align*}

We now show that $\varphi_{t}(x,u)\geq0$ for any $x\in\R$, $u\in\Gamma$ and $t\in[0,T]$. Since $u\in\Gamma$ and $\Gamma$ is a cone,
$v:=\frac{u}{|x|}\idd{x\neq 0}\in\Gamma$. Recall that  $f_{t}$ is homogeneous of degree $p>1$ and $b_{t}$, $\sigma_{t}$ are homogeneous of degree 1 in $(x,u)$, so
\begin{align*}
\varphi_{t}(x,u)&=\idd{x>0}|x|^{p}\Big(\hf p(p-1)P_{1,t}|\sigma_{t}(1, v)|^2-\og^{*}_{1,t}(P_{1,t}, \lm_{1,t})+p(P_{1,t}b_{t}(1,v)+\lm_{1,t}^{\top}\sigma_{t}(1,v))+f_{t}(1,v)\Big)\\
&\quad+\idd{x<0}|x|^{p}\Big(\hf p(p-1)P_{2,t}|\sigma_{t}({-}1, v)|^2-\og^{*}_{2,t}(P_{2,t}, \lm_{2,t})\\
&\qquad\qquad\qquad\quad-p(P_{2,t}b_{t}({-}1,v)+\lm_{2,t}^{\top}\sigma_{t}({-}1,v))+f_{t}({-}1,v)\Big)\\
&\quad\;+\idd{x=0}f_t(0,u)\\
&\geq0,
\end{align*}
by the definitions of $\og^*_1$, $\og^*_2$ and $f_{t}(0,u)\geq0$.
Now it follows from \eqref{sum} that
\begin{align*}
\E\Big[\int_0^{\tau_{k}} f_{t}(X_t, u_t)\dt+P_{1,\tau_{k}}(X^{+}_{\tau_{k}})^{p}+P_{2,\tau_{k}}(X^{-}_{\tau_{k}})^{p}\Big]
&\geq P_{1,0}(x^+)^p+P_{2,0}(x^-)^p.
\end{align*}
Since $f_{t}\geq 0$, we can apply the monotone convergence theorem to the integral  above; and since $\E\Big[\sup_{t\in[0,T]}|X_t|^p\Big]<\infty$ and $P_{1}$ and $P_{2}$ are bounded, we can apply the dominated convergence theorem to the other two terms to get
\begin{align*}
\E\Big[\int_0^{T} f_{t}(X_t, u_t)\dt+P_{1,T}(X^{+}_{T})^{p}+P_{2,T}(X^{-}_{T})^{p}\Big]
&\geq P_{1,0}(x^+)^p+P_{2,0}(x^-)^p.
\end{align*}
Since $P_{1,T}=g(1)$, $P_{2,T}=g({-}1)$ and $g$ is homogenous, the lower bound \eqref{localoptimal} is established.

\textbf{Step 3.}
We next show the following claims hold:
\begin{description}
\item[\quad Claim (i)] The state process \eqref{state1} admits a unique continuous solution, denoted by $X^*$, under the feedback control $u^{*}$ given in \eqref{opticon};

\item[\quad Claim (ii)] $J(X^*,u^*)=\E\Big[\int_0^T f_{t}(X^*_t, u_t^*)\dt+g(X^*_{T})\Big]\leq P_{1,0}(x^+)^p+P_{2,0}(x^-)^p$;

\item[\quad Claim (iii)] $\E\Big[\sup_{t\in[0,T]}|X_t^*|^p\Big]<\infty$.
\end{description}
Claims (i)-(iii) show that $u^{*}$ is an admissible control in $\mathcal{U}$ to the problem \eqref{state1}-\eqref{prob1}. Furthermore, Claim (ii) and \eqref{localoptimal} indicate that $u^{*}$ is indeed an optimal control to \eqref{state1}-\eqref{prob1}. Therefore, the proof of the theorem will be complete if
Claims (i)-(iii) can be proved. We now prove them one by one.
\bigskip

\textbf{Proof of Claim (i).}
Substituting the feedback control $u^{*}$ into the state process \eqref{state1}, and using the homogeneous property of $b_{t}$ and $\sigma_{t}$, we have
\begin{align}
\label{SDEstate}
\begin{cases}
\dd X_t=b_{t}(X_t,u^*(t,X_{t}))\dt+\sigma_{t}(X_t,u^*(t,X_t))^{\top}\dw_t\\
\qquad=\idd{X_{t}>0}X_t [b_{t}(1, \hat v_1)\dt+\sigma_{t}(1, \hat v_1)^{\top}\dw_t]+\idd{X_{t}<0}X_t [b_{t}({-}1, \hat v_2)\dt+\sigma_{t}({-}1, \hat v_2)^{\top}\dw_t], \\
X_0=x.
\end{cases}
\end{align}
By the estimates \eqref{sigbup1}-\eqref{bup1} and $\lm_1, \lm_2\in \BMO$,
according to Gal'chuk \cite[basic theorem on pp. 756-757]{Ga}, the SDE \eqref{SDEstate} admits a unique continuous solution,
which is clearly given by
\begin{align}\label{optimalx}
X^*_t=\begin{cases}
X^*_{1,t},&\quad\mbox{if $x>0$;}\\
0,&\quad\mbox{if $x=0$;}\\
X^*_{2,t},&\quad\mbox{if $x<0$;}
\end{cases}
\end{align}
where
\begin{align*}
&X^*_{1,t}=x\exp\bigg\{\int_0^t\Big(b_{s}(1, \hat v_1)-\frac{1}{2}|\sigma_{s}(1, \hat v_1)|^2\Big)\ds+\int_0^t\sigma_{s}(1, \hat v_1)^{\top}\dw_s \bigg\},\\
&X^*_{2,t}=x\exp\bigg\{\int_0^t\Big(b_{s}({-}1, \hat v_2)-\frac{1}{2}|\sigma_{s}({-}1, \hat v_2)|^2\Big)\ds+\int_0^t\sigma_{s}({-}1, \hat v_2)^{\top}\dw_s \bigg\}.
\end{align*}
This completes the proof of Claim (i).
\bigskip

\textbf{Proof of Claim (ii).}
Thanks to the continuity of $X^*$, the estimates \eqref{sigbup1}-\eqref{bup1} and $\varphi_{t}(X^*_t,u^*_t)=0$, we know,  recalling \eqref{sum}, there is a
sequence of increasing stopping times $\{\tau_{k}\}$ with $\lim_{k}\tau_{k}=T$ such that,
\begin{align*}\label{stop}
\mathbb{E}\Bigg[\int_0^{\iota\wedge\tau_k}f_{s}(X_s^*,u^*_{s})\ds+P_{1,\iota\wedge\tau_k}(X_{\iota\wedge\tau_k}^{*,+})^p
+P_{2,\iota\wedge\tau_k}(X_{\iota\wedge\tau_k}^{*,-})^p
\Bigg]
&=P_{1,0}(x^+)^p+P_{2,0}(x^-)^p,
\end{align*}
for any stopping time $\iota$ valued in $[0,T]$.
Now applying Fatou's lemma to the left hand side of above leads to
\begin{equation}
\label{ustar}
\E\Big[\int_0^{\iota} f_{t}(X_t^*, u_t^*)\dt+P_{1,\iota}(X^{*,+}_{\iota})^{p}+P_{2,\iota}(X^{*,-}_{\iota})^{p}\Big]\leq P_{1,0}(x^+)^p+P_{2,0}(x^-)^p.
\end{equation}
Especially, we deduce Claim (ii) by taking $\iota=T$ in above.

Furthermore, in Case III, we have $f_{t}\geq 0$ and $P_{1} $ and $ P_{2}$ are uniformly positive, so \eqref{ustar} implies
\begin{equation}
\label{uniboundstopping}
\sup_{\iota}\E\Big[|X^{*}_{\iota}|^{p}\Big]\leq K~~\mbox{in Case III}
\end{equation}
with some constant $K$, where the supreme is taken over all stopping times $\iota$ valued in $[0,T]$.
\bigskip

\textbf{Proof of Claim (iii).}
We now prove $$\E\Big[\sup_{t\in[0,T]}|X_t^*|^p\Big]<\infty.$$
This is trivial by \eqref{optimalx} when $x=0$. We only deal with the case $x>0$ since the case $x<0$ can be conducted similarly.

\underline{Case I.}
Thanks to Assumptions \ref{homocondition} and \ref{solvablecondition1}, we have
\begin{align*}
|X_s^*|^{p-1}|b_{s}(X_s^*,u_s^*)|&=|X_s^*|^{p}|b_{s}(1,\hat v_{1})|\leq |X_s^*|^{p} (\delta f_{s}(1,\hat v_{1})+L)^{1/2} \\
&\leq \frac{1}{2} |X_s^*|^{p} (\delta f_{s}(1,\hat v_{1})+L+1)= \frac{1}{2}(\delta f_{s}(X_s^*,u_s^*)+(L+1)|X_s^*|^{p} ),
\end{align*}
and
\begin{align*}
|X_s^*|^{p-2}\idd{X_s^*\neq0}|\sigma_{s}(X_s^*,u_s^*)|^{2}
&=|X_s^*|^{p}\idd{X_s^*\neq0}|\sigma_{s}(1,\hat v_{1})|^{2}\\
&\leq|X_s^*|^{p}\idd{X_s^*\neq0} (\delta f_{s}(1,\hat v_{1})+L)= \delta f_{s}(X_s^*,u_s^*)+L|X_s^*|^{p}.
\end{align*}
Let $a$ be any real constant.
Applying \ito \cite[Lemma 2.2]{BDHPS} to $e^{at}|X_t^{*}|^p$ gives
\begin{align*}
e^{at}|X_t^{*}|^p& =x^p+\int_0^{t}e^{as}
\bigg(a|X_s^{*}|^p+p|X_s^*|^{p-1}\sgn{X_s^*}b_{s}(X_s^*,u_s^*)+\frac{1}{2}p(p-1)|X_s^*|^{p-2}\idd{X_s^*\neq0}|\sigma_{s}(X_s^*,u_s^*)|^2\bigg)\ds\nn\\
&\qquad\;+\int_0^{t}e^{as}p|X_s^*|^{p-1}\sgn{X_s^*}\sigma_{s}(X_s^*,u_s^*)^{\top}\dw_s\\
&\leq x^p+\int_0^{t}e^{as}
\bigg(a|X_s^{*}|^p+\frac{1}{2}p(\delta f_{s}(X_s^*,u_s^*)+(L+1)|X_s^*|^{p} )+\frac{1}{2}p(p-1)(\delta f_{s}(X_s^*,u_s^*)+L|X_s^*|^{p})\bigg)\ds\nn\\
&\qquad\;+\int_0^{t}e^{as}p|X_s^*|^{p-1}\sgn{X_s^*}\sigma_{s}(X_s^*,u_s^*)^{\top}\dw_s.\end{align*}
Construct a sequence of stopping times as follows:
\begin{align*}
\theta_{k}:=\inf\Big\{&t\geq0\;\Big|\;|X_t^*|
> k \Big\}\wedge T.
\end{align*}
Since $X^*$ is continuous, $\theta_{k}$ increasingly converges to $T$ as $k\to\infty$.
By the Burkholder-Davis-Gundy  inequality,
\begin{align}\label{BDG1}
&\quad\; \E\bigg[\sup_{t\in[0,\theta_{k}]} \bigg|\int_0^{t}e^{as}p|X_s^*|^{p-1}\sgn{X_s^*}\sigma_{s}(X_s^*,u_s^*)^{\top}\dw_s\bigg|\bigg]\nn\\
&\leq M\E\bigg[\bigg(\int_0^{\theta_{k}}e^{2as}|X_s^*|^{2(p-1)}|\sigma_{s}(X_s^*,u_s^*)|^2\ds\bigg)^{1/2}\bigg]\nn\\
&\leq M\E\bigg[\sup_{t\in[0,\theta_{k}]}e^{at/2}|X_t^{*}|^{p/2} \bigg(\int_0^{\theta_{k}} e^{as}|X_s^*|^{p-2}\idd{X^*_s\neq0}|\sigma_{s}(X_s^*,u_s^*)|^2\ds\bigg)^{1/2}\bigg]\nn\\
&\leq \hf\E\bigg[\sup_{t\in[0,\theta_{k}]}e^{at}|X_t^{*}|^{p}\bigg]
+M^{2}\E\bigg[\int_0^{\theta_{k}} e^{as}|X_s^*|^{p-2}\idd{X^*_s\neq0}|\sigma_{s}(X_s^*,u_s^*)|^2\ds\bigg]\nn\\
&\leq \hf\E\bigg[\sup_{t\in[0,\theta_{k}]}e^{at}|X_t^{*}|^{p}\bigg]
+M^{2}\E\bigg[\int_0^{\theta_{k}} e^{as}\Big( \delta f_{s}(X_s^*,u_s^*)+L|X_s^*|^{p}\Big)\ds\bigg],
\end{align}
where $M$ is a constant independent of $k$ and $a$.
Combining the above two estimates, we conclude, for $a\leq -\frac{1}{2}p^{2} L-\frac{1}{2}p-M^{2}L$,
\begin{align*}
\hf\E\bigg[\sup_{t\in[0,\theta_{k}]}e^{at}|X_t^{*}|^{p}\bigg]
&\leq x^p+\E\bigg[\sup_{t\in[0,\theta_{k}]}\int_0^{t}e^{as}
\bigg(a|X_s^{*}|^p+\frac{1}{2}p(\delta f_{s}(X_s^*,u_s^*)+(L+1)|X_s^*|^{p} )\nn\\
&\qquad\qquad\qquad+\frac{1}{2}p(p-1)(\delta f_{s}(X_s^*,u_s^*)+L|X_s^*|^{p})\bigg)\ds\bigg]\nn\\
&\qquad\;+M^{2}\E\bigg[\int_0^{\theta_{k}} e^{as}\Big( \delta f_{s}(X_s^*,u_s^*)+L|X_s^*|^{p}\Big)\ds\bigg]\nn\\
&\leq x^p+\E\bigg[\sup_{t\in[0,\theta_{k}]}\int_0^{t}e^{as}
\Big(a+\frac{1}{2}p^{2}L+\frac{1}{2}p+M^{2}L\Big)|X_s^{*}|^p\ds\bigg]\\
&\qquad\;+\Big(\frac{1}{2}p^{2}+M^{2}\Big)\delta\E\bigg[ \sup_{t\in[0,\theta_{k}]}\int_0^{t}e^{as} f_{s}(X_s^*,u_s^*)\ds\bigg]\nn\\
&\leq x^p+\Big(\frac{1}{2}p^{2}+M^{2}\Big)\delta e^{aT}\E\bigg[ \int_0^{T}f_{s}(X_s^*,u_s^*)\ds\bigg]\leq c,
\end{align*}
where the last inequality is due to Claim (ii) and non-negativity of $P_{1}$ and $P_{2}$ and $f_{s}$.
Now applying Fatou's lemma, the proof of Claim (iii) in Case I is complete.

\underline{Case II.} In this case, $\sigma(\pm1,\cdot)$ is bounded. Recall that $x>0$ and \eqref{optimalx}, applying H\"older's inequality yields
\begin{align*}
\E\Big[\sup_{t\in[0,T]}|X_t^*|^p\Big]
&=x^{p} \E\bigg[\sup_{t\in[0,T]}\exp\bigg\{\int_0^t\bigg(p b_{s}(1, \hat v_1)-\frac{1}{2}p|\sigma_{s}(1, \hat v_1)|^2\bigg)\ds+\int_0^tp\sigma_{s}(1, \hat v_1)^{\top}\dw_s \bigg\} \bigg]\\
&\leq c\E\bigg[\sup_{t\in[0,T]}\exp\bigg\{ \int_0^t\bigg(2p b_{s}(1, \hat v_1)+(2p^{2}-p)|\sigma_{s}(1, \hat v_1)|^2\bigg)\ds\bigg\}\bigg]^{1/2}\\
&\qquad\times\E\bigg[\sup_{t\in[0,T]}\exp\bigg\{-\int_0^t 2p^{2}|\sigma_{s}(1, \hat v_1)|^2\ds+\int_0^t2p\sigma_{s}(1, \hat v_1)^{\top}\dw_s \bigg\} \bigg]^{1/2}.
\end{align*}
The estimate \eqref{sigbup1case2} holds in Case II, so
\begin{align*}
&\quad\;\E\bigg[\sup_{t\in[0,T]}\exp\bigg\{ \int_0^t\bigg(2p b_{s}(1, \hat v_1)+(2p^{2}-p)|\sigma_{s}(1, \hat v_1)|^2\bigg)\ds\bigg\}\bigg]\\
&\leq \E\bigg[\exp\bigg\{ c\int_0^T(1+|\lm_1|)\ds\bigg\}\bigg]<\infty,
\end{align*}
where the last inequality is due to standard estimate for BMO martingales.
It is left to show
\begin{align}\label{supy}
\E\bigg[\sup_{t\in[0,T]}Y_{t}\bigg]<\infty,
\end{align}
where
\begin{align*}
Y_{t}=\exp\bigg\{-\int_0^t2p^{2}|\sigma_{s}(1, \hat v_1)|^2\ds+\int_0^t 2p\sigma_{s}(1, \hat v_1)^{\top}\dw_s \bigg\}.
\end{align*}
Since $\sigma(\pm1,\cdot)$ is bounded, $Y$ is a martingale with $\E[Y_T^{k}]<\infty$ for any $k\geq 1$. Then the estimate \eqref{supy} comes from martingale inequality immediately. This complete the proof of Claim (iii) in Case II.

\underline{Case III.}
Thanks to Assumptions \ref{homocondition} and \ref{solvablecondition3}, we have
\begin{align}\label{qua2used}
p|X_s^*|^{p-1}|b_{s}(X_s^*,u_s^*)|
&\leq \delta p|X_s^*|^{p-2}\idd{X_s^*\neq0}|\sigma_{s}(X_s^*,u_s^*)|^2+L|X_s^*|^{p}.
\end{align}
Similar to Case I, applying \ito to $e^{Lt}|X_t^{*}|^p$ gives
\begin{align}
e^{Lt}|X_t^{*}|^p& =x^p+\int_0^{t}e^{Ls}
\bigg(L|X_s^{*}|^p+p|X_s^*|^{p-1}\sgn{X_s^*}b_{s}(X_s^*,u_s^*)\nn\\
&\qquad\qquad\qquad\qquad+\frac{1}{2}p(p-1)|X_s^*|^{p-2}\idd{X_s^*\neq0}|\sigma_{s}(X_s^*,u_s^*)|^2\bigg)\ds\nn\\
&\qquad\;+\int_0^{t}e^{Ls}p|X_s^*|^{p-1}\sgn{X_s^*}\sigma_{s}(X_s^*,u_s^*)^{\top}\dw_s\nn\\
&\geq x^p+\Big(\frac{1}{2}(p-1)-\delta\Big)p\int_0^{t}e^{Ls} |X_s^*|^{p-2}\idd{X_s^*\neq0}|\sigma_{s}(X_s^*,u_s^*)|^2\ds\nn\\
&\qquad\;+\int_0^{t}e^{Ls}p|X_s^*|^{p-1}\sgn{X_s^*}\sigma_{s}(X_s^*,u_s^*)^{\top}\dw_s.\label{ito2}
\end{align}
Define
\begin{align*}
\theta_{k}:=\inf\bigg\{&t\geq0\;\bigg|\;|X_t^*|+\int_{0}^{t}|\sigma_{s}(1, \hat v_{1})|^{2}\ds
> k \bigg\}\wedge T.
\end{align*}
Since $X^*$ is continuous, by \eqref{sigup1}
we see $\theta_{k}$ is a sequence of stopping times that increasingly converges to $T$ as $k\to\infty$.
Noticing
\begin{align*}
|X_s^*|^{p-2}\idd{X_s^*\neq0}|\sigma_{s}(X_s^*,u_s^*)|^2
&=|X_s^*|^{p}|\sigma_{s}(1, \hat v_{1})|^2,
\end{align*}
recalling $\delta< \hf (p-1)$ and \eqref{uniboundstopping}, it then follows from \eqref{ito2} that
\begin{align*}
\E\bigg[\int_0^{\theta_{k}}e^{Ls}
|X_s^*|^{p-2}\idd{X_s^*\neq0}|\sigma_{s}(X_s^*,u_s^*)|^2\ds\bigg]\leq \frac{\E\Big[e^{L\theta_{k}}|X_{\theta_{k}}^{*}|^p\Big]}{(\frac{1}{2}(p-1)-\delta)p}\leq \frac{Ke^{LT}}{(\frac{1}{2}(p-1)-\delta)p}.
\end{align*}
%
We notice \eqref{BDG1} holds except for the last inequality, so it follows from \eqref{uniboundstopping}, \eqref{qua2used}, \eqref{ito2} and the above estimate that
\begin{align*}
&\quad\;\E\bigg[\sup_{t\in[0,\theta_{k}]}e^{Lt}|X_t^{*}|^p\bigg]\\
&\leq x^p+ \hf\E\bigg[\sup_{t\in[0,\theta_{k}]}e^{Lt}|X_t^{*}|^p\bigg]
+M^{2}\E\bigg[\int_0^{\theta_{k}} e^{Ls}\Big(|X_s^*|^{p-2}\idd{X^*_s\neq0}|\sigma_{s}(X_s^*,u_s^*)|^2+2L|X_s^*|^p\Big)\ds\bigg]\\
&\leq x^p+ \hf\E\bigg[\sup_{t\in[0,\theta_{k}]}e^{Lt}|X_t^{*}|^{p}\bigg]
+M^{2}\bigg(\frac{Ke^{LT}}{(\frac{1}{2}(p-1)-\delta)p}+2KTe^{LT}\bigg).
\end{align*}
In this estimate the constants are independent of $k$, so Claim (iii) in Case III follows from Fatou's lemma.
\end{proof}

\section{Concluding remarks}\label{conclude}
%
In this paper, we introduced and solved a new class of homogeneous stochastic control problems with cone control constraints under three distinct scenarios. We provided explicit expressions for the optimal value and the state feedback form of the optimal control in terms of solutions to two associated BSDEs, established through a rigorous verification procedure. The solvability of these BSDEs is of independent interest from the perspective of BSDE theory.

There are several promising directions for future research. First, our analysis is restricted to the one-dimensional state process in \eqref{state1}; it would be interesting to investigate   the problem \eqref{prob1} with a multi-dimensional state process. Second, when the state process exhibits discontinuities, such as those induced by Poisson jumps, new methods may be required to solve problem \eqref{prob1}. Exploring these extensions could yield further valuable insights.

%

\begin{thebibliography}{99}
\bibitem {Bismut76} Bismut J M. Linear quadratic optimal stochastic control with random coefficients. SIAM J. Control Optim., 1976, 14(3): 419-444.

\bibitem {BDHPS} Briand P, Delyon B, Hu Y, Pardoux E, Stoica L. $L^p$ solutions of backward stochstic differential equations. Stochastic Process. Appl. 2003, 108:109-129.

%

\bibitem {Du} Du K. Solvability conditions for indefinite linear quadratic optimal stochastic control problems and associated stochastic Riccati equations. SIAM J. Control Optim., 2015, 53(6): 3673-3689.

\bibitem {Ga} Gal'chuk L I. Existence and uniqueness of a solution for stochastic equations with respect to semimartingales. Theory Probab. Appl., 1979, 23(4): 751-763.

\bibitem {HJX} Hu M, Ji S, Xue X. A global stochastic maximum principle for fully coupled forward-backward stochastic systems.SIAM J. Control Optim., 2018, 56(6): 4309-4335.

\bibitem {HSX} Hu Y, Shi X, Xu Z Q. Constrained stochastic LQ control with regime switching and application to portfolio selection. Ann. Appl. Probab., 2022, 32 (1):426-460.

\bibitem {HZ03} Hu Y, Zhou X. Indefinite stochastic Riccati equations. SIAM J. Control Optim., 2003, 42(1): 123-137.

\bibitem {HZ} Hu Y, Zhou X Y. Constrained stochastic LQ control with random coefficients, and application to portfolio selection. SIAM J. Control Optim., 2005, 44(2): 444-466.


\bibitem {JJS} Ji S, Jin H, Shi X. Mean-variance portfolio selection with non-linear wealth dynamics and random coefficients.  ESAIM Control Optim. Calc. Var., 2024, 30: 45.



\bibitem {Ka} Kazamaki N. Continuous exponential martingales and BMO. Springer, 2006.

\bibitem {Ko} Kobylanski M. Backward stochastic differential equations and partial differential equations with quadratic growth. Ann. Probab., 2000, 28(2): 558-602.


\bibitem {KT}Kohlmann M, Tang S. Global adapted solution of one-dimensional stochastic Riccati equations, with application to the mean-variance hedging. Stochastic Process. Appl., 2002, 97(2): 255-288.



\bibitem {Peng} Peng S. A general stochastic maximum principle for optimal control problems. SIAM J. Control Optim., 1990, 28(4): 966-979.

\bibitem {Peng92} Peng S.    Stochastic Hamilton-Jacobi-Bellman equations. SIAM J. Control Optim., 1992, 30(2):  284-304.

\bibitem {Pham} Pham H. Continuous-time stochastic control and optimization with financial applications. Springer Science \& Business Media, 2009.

\bibitem {QZ}  Qian Z, Zhou X Y. Existence of solutions to a class of indefinite stochastic Riccati equations.SIAM J. Control Optim., 2013,  51(1): 221-229.

\bibitem {SXY} Sun J, Xiong J, Yong J. Indefinite stochastic linear-quadratic optimal control problems with random coefficients: Closed-loop representation of open-loop optimal controls. Ann. Appl. Probab., 2021, 31(1): 460-499.

\bibitem {Tang03} Tang S. General linear quadratic optimal stochastic control problems with random coefficients: linear stochastic Hamilton systems and backward stochastic Riccati equations. SIAM J. Control Optim., 2003, 42(1): 53-75.

\bibitem {Tang15} Tang S. Dynamic programming for general linear quadratic optimal stochastic control with random coefficients. SIAM J. Control Optim., 2015, 53(2): 1082-1106.

\bibitem {Wonham} Wonham W M. On a matrix Riccati equation of stochastic control. SIAM J. Control, 1968, 6(4): 681-697.


\bibitem {YZ} Yong J, Zhou X. Stochastic controls: Hamiltonian systems
and HJB equations. 1999, Springer, New York.

\bibitem {Yu} Yu Z. Continuous-time mean-variance portfolio selection with random horizon. Appl. Math. Optim., 2013, 68(3): 333-359.


\end{thebibliography}

\end{document}